\numberwithin{equation}{section}
\newtheorem{theorem}{Theorem}[section]
\newtheorem{proposition}[theorem]{Proposition}
\newtheorem{conjecture}[theorem]{Conjecture}
\newtheorem{lemma}[theorem]{Lemma}
\theoremstyle{definition}
\newtheorem{definition}[theorem]{Definition}
\newtheorem{algorithm}[theorem]{Algorithm}
\DeclareMathOperator{\Lie}{\textit{Lie}}
\DeclareMathOperator{\Grav}{\textit{Grav}}
\DeclareMathOperator{\Hycomm}{\textit{Hycomm}}
\DeclareMathOperator{\sgn}{sgn}
\DeclareMathOperator{\Aut}{Aut}
\DeclareMathOperator{\Ind}{Ind}
\newcommand{\V}{\mathcal{V}}
\renewcommand{\L}{\mathcal{L}}
\newcommand{\M}{\overline{M}}
\renewcommand{\H}{\overline{H}}
\renewcommand{\S}{\mathbb{S}}
\newcommand{\mg}[1]{\M_g^{\leqslant #1}}
\newcommand{\mog}[1]{\M_{0,2g+2}^{(#1)}}
\newcommand{\dmog}{\partial\mog{0}}
\newcommand{\hg}[1]{\H_g^{\leqslant #1}}
\newcommand{\C}{\mathbb{C}}
\newcommand{\Z}{\mathbb{Z}}
\renewcommand{\P}{\mathbb{P}}
\newcommand{\Set}[2]{\left\{ #1,\ldots , #2 \right\}}
\DeclareMathOperator{\asn}{asn}
\DeclareMathOperator{\ccd}{ccd}
\DeclareMathOperator{\ce}{ce}
\DeclareMathOperator{\Hur}{\textit{Hur}}
\DeclareMathOperator{\oHur}{\overline{\Hur}}
\title[Cohomological excess of moduli spaces]{The cohomological excess
  of certain moduli spaces of curves of genus $g$}
\author{Chitrabhanu Chaudhuri}
\address{Max Planck Institute for Mathematics, Bonn, Germany}
\email{chitro@mpim-bonn.mpg.de}
\begin{document}

\maketitle

\begin{abstract}
  The open set $\mg{k} \subset \M_g$ parametrizes stable curves of
  genus g having at most k rational components. By the work of Looijenga, 
  one expects that the cohomological excess of $\mg{k}$ is at most 
  $g-1+k$. In this paper we show that when $k=0$, the conjectured 
  upper bound is sharp by showing that there is a constructible sheaf 
  on $\hg{0}$ (the hyperelliptic locus) which has non-vanishing cohomology 
  in degree $3g-2$.
\end{abstract}

\maketitle

\section{Introduction}

This research originated from a conjecture by Looijenga which can be
stated as follows:
\begin{conjecture}
  The coarse moduli space $M_g$ of smooth curves of genus $g >1$ can 
  be covered by $g-1$ open affine subvarieties.
\end{conjecture}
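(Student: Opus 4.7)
The conjecture is still open in general, so what follows is necessarily a plan of attack rather than a proof. The most direct approach is to construct an explicit affine cover. I would work on $\M_g$ (the Deligne--Mumford compactification) and seek $g-1$ effective divisors $D_1,\dots,D_{g-1}$, each containing the full boundary $\partial \M_g$ and each ample on $\M_g$, such that $M_g \cap \bigcap_i D_i = \emptyset$. Ampleness makes $U_i := \M_g \setminus D_i$ affine (by Goodman's theorem), the containment of the boundary forces $U_i \subset M_g$, and the emptiness of the common intersection means the $U_i$ cover $M_g$. The crux is to exhibit such divisors using the classes $\lambda$, $\kappa_1$ and the boundary classes $\delta_j$, drawing on the ample cone results of Cornalba--Harris and Gibney--Keel--Morrison.

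An alternative is induction on $g$. Suppose by induction that $M_{g-1}$ is covered by $g-2$ affine opens $V_1,\dots,V_{g-2}$. One could then try to construct $g-2$ affines on $M_g$ extending these (for instance by pulling back under a family degenerating to a nodal curve with an elliptic tail), and finally add one further affine open $U_0 \subset M_g$ -- perhaps the complement of a Brill--Noether or Weierstrass-type locus -- covering what is not yet captured. The difficulty is to ensure that the inductive step genuinely preserves affineness in the presence of moduli, rather than merely cohomological dimension.

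The main obstacle in either approach is the tension between ampleness (which is restrictive on $\M_g$) and the combinatorial covering requirement: the discrete stratification of $M_g$ into $g-1$ pieces must correspond to a natural geometric invariant of smooth curves, and the divisors engineered to single out these strata must lie inside the ample cone. The sharpness result proved in this paper for $\mg{0}$ -- showing that the hyperelliptic locus already forces cohomological excess at least $g-1$ -- identifies the hyperelliptic locus as the essential source of obstruction; any construction will have to handle curves of low gonality with particular care, and it is conceivable that ampleness-based arguments must be replaced by more subtle quasi-affineness or stratified Morse-theoretic techniques on $\M_g$.
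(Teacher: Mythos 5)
This statement is stated in the paper as an open conjecture of Looijenga: the paper offers no proof of it, and indeed cites Fontanari--Pascolutti \cite{FP} only for the range $2\le g\le 5$. There is therefore no proof in the paper to compare yours against, and your text --- which you candidly present as a plan of attack rather than a proof --- cannot be accepted as a proof of the statement. The logical skeleton of your first approach is sound: if $D_1,\dots,D_{g-1}$ are effective ample divisors on $\M_g$, each containing all of $\partial\M_g$, with $M_g\cap\bigcap_i D_i=\emptyset$, then the complements $U_i=\M_g\setminus D_i$ are affine, lie in $M_g$, and cover it. This is precisely the strategy of Fontanari--Pascolutti (and of \cite{FL}) in low genus. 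But the entire content of the conjecture is the existence of such divisors, and you do not construct them; for general $g$ it is not known that the ample cone of $\M_g$ contains classes with effective representatives arranged in this way, and the combinatorial requirement that $g-1$ of them have empty common intersection on $M_g$ has no known candidate stratification behind it. The inductive approach has the same status: "extend affines from $M_{g-1}$ to $M_g$ via degeneration to an elliptic tail" is not an operation that is known to preserve affineness, and you acknowledge this.

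One further caution: the paper's actual result (Theorem~\ref{thm:upper}, via Lemma~\ref{lem:main}) is a \emph{lower} bound, showing that $\ce(\mg{0})\ge g-1$ and hence that the conjectural bounds are sharp if true. It provides evidence that $g-1$ is the right number but contributes nothing toward the upper bound you would need. Your closing remark that the hyperelliptic locus is "the essential source of obstruction" reverses the logic slightly: the paper shows the hyperelliptic locus \emph{witnesses} that one cannot do better than $g-1$ affines; it does not show that handling the hyperelliptic locus suffices to achieve $g-1$.
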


Fontanari and Pascolutti \cite{FP} prove this conjecture for genus $2
\leq g \leq 5$. (The paper of Fontanari and Looijenga \cite{FL} is
also relevant.) This conjecture gives bounds on the topological
complexity of the moduli space. For example, it would imply the
cohomological dimension of constructible sheaves on $M_g$ is at most
\begin{equation*}
  \dim M_g + (g-2) = 4g-5 .
\end{equation*}
For local systems on $M_g$ this was established by Harer. Looijenga's
conjecture may be viewed as a generalization of Harer's theorem.

The minimum number of open affine subsets needed to cover a variety 
less one is called the \emph{affine covering number}. Later Roth and 
Vakil \cite{RV} introduced the closely related \emph{affine stratification 
number} ($\asn$). 

The Deligne-Mumford compactification $\M_g$ is a projective variety
which contains $M_g$ as the complement of a normal crossings divisor
and parametrizes stable curves of genus $g$, that is, projective curves
whose singularities are nodes and whose smooth locus has no components
of non-negative Euler characteristic.

Graber and Vakil \cite{GV} have introduced a filtration of this
variety: $\M_{g,n}^{\leqslant k}$ parametrizes stable curves of genus
$g$ having at most $k$ rational components. Roth and Vakil extend
Looijenga's conjecture to the following:
\begin{conjecture}
  \label{conj:asn}
  The affine stratification number of $\M_{g,n}^{\leqslant k}$ is at
  most $g-1+k$.
\end{conjecture}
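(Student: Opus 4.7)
The plan is to attempt Conjecture \ref{conj:asn} by induction on $g$, using the boundary stratification of $\M_{g,n}^{\leqslant k}$ by stable dual graphs together with the behavior of $\asn$ under standard geometric operations. First I would record the basic properties of $\asn$ developed by Roth--Vakil: for an open-closed decomposition $X = U \sqcup Z$ one has $\asn(X) \leq \asn(U) + \asn(Z) + 1$; for products, $\asn(X\times Y) \leq \asn(X) + \asn(Y)$; and $\asn$ does not increase under finite surjective maps.

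Next I would stratify $\M_{g,n}^{\leqslant k}$ by topological types of stable curves. The stratum $\M_\Gamma$ attached to a dual graph $\Gamma$ with $j \leq k$ rational vertices is a finite quotient of $\prod_v M_{g_v, n_v}$. Rational vertices contribute factors $M_{0, n_v}$, which are affine, so contribute nothing to $\asn$; non-rational vertices can be handled by assuming (as the induction hypothesis, or as an external input) Looijenga's conjecture for smaller genera, so that $\asn(M_{g_v,n_v}) \leq g_v - 1$. Summing over vertices gives $\asn(\M_\Gamma) \leq g - r(\Gamma)$, where $r(\Gamma)$ is the number of non-rational vertices.

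The delicate step is combining these stratum-by-stratum bounds into the global bound $g-1+k$. Iterated naive open-closed subadditivity is far too lossy, because it sums bounds rather than taking maxima, so one cannot simply aggregate over strata. Instead, I would aim to build an explicit affine stratification of $\M_{g,n}^{\leqslant k}$: start with an affine cover of $M_{g,n}$ furnished by Looijenga's conjecture, and extend each affine piece along the boundary to a locally closed affine piece that sweeps up a prescribed family of boundary strata, using roughly $k$ additional affines (one per rational component, or one per ``layer'' indexed by $j$) to absorb the remaining contributions.

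The main obstacle is precisely this last step. Producing affine neighborhoods of boundary strata in $\M_{g,n}^{\leqslant k}$ whose combinatorics fit the bound $g-1+k$ appears to require either an explicit geometric construction (perhaps via admissible covers, Hurwitz spaces, or the projection to $\M_g^{\leqslant k}$) or a genuinely new idea about how rational components interact with the affine covering problem. The argument is also conditional on Looijenga's original conjecture, which is unknown for $g > 5$, so at present any such proof would have to be conditional on that input.
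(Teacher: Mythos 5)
The statement you are trying to prove is stated in the paper as a \emph{conjecture} (due to Roth and Vakil, extending Looijenga's conjecture), and the paper offers no proof of it; its actual contribution goes in the opposite direction, namely a \emph{lower} bound: it exhibits a constructible sheaf on $\hg{0}$ with cohomology in degree $3g-2$, showing that the conjectured upper bound $g-1+k$ would be sharp for $k=0$. So there is no ``paper's own proof'' to compare against, and your proposal should be judged as an attempt at an open problem.

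As such, your own write-up correctly identifies where it breaks down, and that breakdown is genuine. Two separate gaps remain. First, the whole scheme is conditional on Looijenga's conjecture ($\asn(M_{g,n})\leq g-1$, equivalently the affine covering statement), which is known only for $g\leq 5$ (Fontanari--Pascolutti), so even a successful aggregation argument would not yield the conjecture unconditionally. Second, and more fundamentally, the aggregation step does not work with the tools you list: the open-closed subadditivity of $\asn$ sums contributions over the pieces rather than taking a maximum with a single increment, so iterating it over the (exponentially many) boundary strata destroys the bound $g-1+k$. What is needed is a single global affine stratification of $\M_{g,n}^{\leqslant k}$ of depth $g-1+k$ in which each locally closed affine piece absorbs many boundary strata at once; no construction of such a stratification is proposed, and producing one is essentially the content of the conjecture. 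The stratum-by-stratum estimate $\asn(\M_\Gamma)\leq g - r(\Gamma)$ is plausible but is not the bottleneck. In short, the proposal is an honest sketch of a strategy, not a proof, and the statement remains open; the only part of this circle of ideas the paper actually establishes is $\ce(\mg{0})\geq g-1$, i.e.\ that the conjectural bound cannot be improved when $k=0$.
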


Along with other topological consequences this conjecture would prove
that the cohomological dimension for constructible sheaves on
$\M_{g,n}^{\leqslant k}$ is at most $\dim \M_{g,n}^{\leqslant k} + g -
1 + k = 4g-4+n+k$. We shall denote the cohomological dimension for
constructible sheaves on a variety $X$ by $\ccd(X)$. It
is the minimum integer $d$ such that $H^n(X,\mathcal{F}) = 0$ for any
$n > d$ and any constructible sheaf $\mathcal{F}$ on $X$.

Looijenga introduced another invariant, called
\emph{cohomological excess} ($\ce$).
\begin{definition}
  The cohomological excess of a non-empty variety $X$, denoted $
  \ce(X)$, is the maximum of the integers $\ccd(W) - \dim W$, where
  $W$ runs over all the Zariski closed subsets $W \subset X$.
\end{definition}
Looijenga's aim was to give an upper bound for the cohomological 
excess of the moduli space of smooth curves $M_{g,n}$ and, more in 
general, for certain open subsets of $\M_{g,n}$. The expected upper
bound in the case of $\M_{g,n}^{\leqslant k}$ is the following:

\begin{conjecture} 
  \label{conj:ce}
  The cohomological excess of $\M_{g,n}^{\leqslant k}$ is at most
  $g-1+k$.
\end{conjecture}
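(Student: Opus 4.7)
The natural plan is to deduce Conjecture \ref{conj:ce} from Conjecture \ref{conj:asn} via a general monotonicity argument. Recall that the affine stratification number $\asn(X)$ is defined via stratifications by locally closed affine subvarieties, and Roth--Vakil prove the inequality $\ccd(Y) \leq \dim Y + \asn(Y)$ for any variety $Y$. The additional input needed is that $\asn$ is monotone under passage to closed subvarieties: if $Z \subset X$ is Zariski closed and $X = \bigsqcup_\alpha X_\alpha$ is a stratification by locally closed affine subvarieties realizing $\asn(X)$, then $Z = \bigsqcup_\alpha (Z \cap X_\alpha)$ stratifies $Z$ by locally closed subvarieties of affine schemes, each of which is itself affine. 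Combining these observations yields $\ccd(Z) - \dim Z \leq \asn(Z) \leq \asn(X)$ uniformly in $Z$, and the supremum over closed $Z$ gives $\ce(X) \leq \asn(X)$. Taking $X = \M_{g,n}^{\leqslant k}$ reduces Conjecture \ref{conj:ce} to Conjecture \ref{conj:asn}.

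To attack Conjecture \ref{conj:asn} itself, I would induct on $g + k$ using the boundary stratification. The boundary strata $M_\Gamma \subset \M_{g,n}^{\leqslant k}$ are indexed by stable dual graphs $\Gamma$, and $M_\Gamma$ is a finite quotient of $\prod_v M_{g_v, n_v}$, where the constraint $k$ bounds the number of genus-zero vertices. The inductive mechanism would rest on three ingredients: first, a subadditivity estimate for $\asn$ under locally closed stratifications (bounding $\asn$ of the whole in terms of $\asn$ of an open piece and its closed complement); second, the behaviour of $\asn$ under finite quotients and under products, reducing a bound on $M_\Gamma$ to bounds on its factors $M_{g_v, n_v}$; and third, the base cases $\asn(M_{0,n}) = 0$ (since $M_{0,n}$ is affine) together with low-genus verifications. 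Running the induction over all boundary graphs would propagate the bound stratum by stratum and thus to $\M_{g,n}^{\leqslant k}$.

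The main obstacle is the open stratum itself, corresponding to $\Gamma$ a single vertex of genus $g$ with $n$ legs. Here the inductive machinery provides no information, and the required estimate $\asn(M_{g,n}) \leq g-1$ reduces (for $n = 0$, $k = 0$) to the original Looijenga affine covering conjecture for $M_g$, which is known only for $2 \leq g \leq 5$ by Fontanari--Pascolutti \cite{FP}. Thus the strategy outlined above would reduce Conjecture \ref{conj:ce} cleanly to Looijenga's original conjecture; establishing the latter in full generality is the genuine source of the difficulty, and without it one cannot hope to close the induction.
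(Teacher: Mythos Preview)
The statement you are attempting to prove is a \emph{conjecture}; the paper does not prove it, and indeed explicitly presents it as open. The only thing the paper establishes about this conjecture is the sentence immediately following it: ``The conjecture above is consistent with Conjecture~\ref{conj:asn}, since $\ce(X)\le\asn(X)$.'' Your first paragraph is a proof of exactly this inequality, so that part is correct but is already asserted in the paper as a one-line remark, not as a proof of the conjecture.

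Your second and third paragraphs then attempt to attack Conjecture~\ref{conj:asn} by an induction over boundary strata, but you yourself identify the gap: the open stratum $M_{g,n}$ is not handled by the induction, and the required input $\asn(M_{g,n})\le g-1$ is Looijenga's original conjecture, known only for $g\le 5$. So your proposal is not a proof of Conjecture~\ref{conj:ce}; it is a reduction of one open conjecture to another open conjecture (in fact, to the very conjecture that motivates the whole discussion). Since the paper offers no proof of this statement either, there is no ``paper's proof'' to compare against. What the paper actually does is go in the opposite direction: it proves the \emph{lower} bound $\ce(\mg{0})\ge g-1$ (Theorem~\ref{thm:upper}), showing that the conjectured upper bound, if true, is sharp for $k=0$.
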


The conjecture above is consistent with Conjecture~\ref{conj:asn},
since $\ce(X) \leq\asn(X)$. In this paper we show that the upperbound
in the conjecture above is sharp when $k =0$. We consider the locus
\begin{equation*}
  \hg{0} = \mg{0} \cap \H_g
\end{equation*}
of stable hyperelliptic curves in $\mg{0}$. Then there is a
constructible sheaf $\L$ on $\hg{0}$ whose cohomological dimension is
\begin{equation*}
  3g-2 = (g-1) + (2g-1) = (g-1) + \dim \hg{0} .  
\end{equation*}

The space $\H_g$ is a quotient of $\M_{0,2g+2}$ by the action of the
symmetric group $S_{2g+2}$. The constructible sheaf $\L$ on $\hg{0}$
is obtained by taking the push forward of the constant sheaf $\C$, 
under the quotient map. We prove the following result:

\begin{lemma}
  \label{lem:main}
  The cohomology group $H^{3g-2}(\hg{0},\L)$ is non-zero, and 
  $H^k(\hg{0},\L) =0$ for $k>3g-2$.
\end{lemma}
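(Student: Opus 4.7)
The plan begins by replacing sheaf cohomology on the quotient by ordinary cohomology on the cover. Since $\pi\colon\mog{0}\to\hg{0}$ is a finite surjection, $\pi_{\ast}$ is exact on constructible sheaves and the Leray spectral sequence for $\L=\pi_{\ast}\C$ collapses, giving
\[
H^k(\hg{0},\L)\;\cong\;H^k(\mog{0},\C)\qquad\text{for all }k.
\]
The lemma is therefore equivalent to the two statements $H^{3g-2}(\mog{0},\C)\neq 0$ and $H^k(\mog{0},\C)=0$ for $k>3g-2$.

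Next I would describe $\mog{0}$ combinatorially as an open subvariety of $\M_{0,2g+2}$. Its boundary strata are indexed by stable trees of $\P^1$'s, and on a given stratum the admissible double cover is ramified at every marked point and at every node separating the marked points into two odd-sized subsets. A $\P^1$-component $C_v$ of the tree lifts to a rational component of the hyperelliptic cover precisely when it carries at most two \emph{special} points, where a special point is either a marked point or a ramified node. Hence $\mog{0}$ is the union of those strata whose dual tree has every vertex carrying at least four such special points.

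For the vanishing $H^k(\mog{0},\C)=0$ with $k>3g-2$, the cleanest route is to exhibit $\mog{0}$ as a union of $g$ open affine subvarieties, which gives $\asn(\mog{0})\le g-1$ and hence $\ccd(\mog{0})\le\dim\mog{0}+g-1=3g-2$. Alternatively, one can stratify $\mog{0}$ by locally closed pieces $S_{\Gamma}\cong\prod_v M_{0,n_v}$ indexed by admissible dual trees $\Gamma$: each factor $M_{0,n_v}$ is affine of complex dimension $n_v-3$, so Artin vanishing for each stratum, combined with the associated spectral sequence, reduces the problem to the combinatorial statement that no admissible $\Gamma$ can support cohomology in degree exceeding $3g-2$.

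The heart of the argument is the non-vanishing in degree $3g-2$. Since $3g-2$ exceeds $\dim_{\C}\mog{0}=2g-1$ for $g\ge 2$, no simple algebraic-cycle argument applies. Poincar\'e--Lefschetz duality instead identifies $H^{3g-2}(\mog{0},\C)$ with the linear dual of $H^g_c(\mog{0},\C)$, and I would attack the latter through the long exact sequence in compactly supported cohomology for the closed embedding $\dmog\hookrightarrow\M_{0,2g+2}$,
\[
\cdots\to H^{g-1}(\dmog)\to H^g_c(\mog{0})\to H^g(\M_{0,2g+2})\to H^g(\dmog)\to\cdots,
\]
together with Keel's explicit presentation of $H^*(\M_{0,2g+2})$ and a stratification spectral sequence computing $H^*(\dmog)$. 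The principal obstacle lies precisely here: producing a class in $H^g_c(\mog{0})$ that is not killed by the map to $H^g(\M_{0,2g+2})$, nor already in the image from $H^{g-1}(\dmog)$. This is likely to require a weight or purity argument, or an induction on $g$ organized around the geometry of chains of elliptic curves in $\hg{0}$, peeling off a single elliptic tail at a time so as to reduce the lemma to a lower-genus instance.
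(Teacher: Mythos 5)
Your reduction is sound and coincides with the paper's: since $\pi$ is finite and $\mog{0}=\pi^{-1}(\hg{0})$, one gets $H^k(\hg{0},\L)\cong H^k(\mog{0},\C)$, and since $\mog{0}$ is a smooth open subvariety of $\M_{0,2g+2}$ of complex dimension $2g-1$, Poincar\'e duality converts the lemma into $H^g_c(\mog{0})\neq 0$ together with $H^k_c(\mog{0})=0$ for $k<g$. The vanishing half also goes through essentially as you sketch: $\mog{0}$ is a disjoint union of affine strata $M_T$ indexed by ``good'' trees, each good tree has at most $g-1$ edges (Proposition~\ref{prop:nodes}), so each stratum is affine of dimension at least $g$, and the compactly supported stratification spectral sequence therefore has no entries in total degree below $g$. (Your first alternative, covering $\mog{0}$ by $g$ affines, is plausible but you do not carry it out; the stratification route is the one that actually closes.)

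The genuine gap is the non-vanishing of $H^g_c(\mog{0})$, which is the entire content of the lemma and which you explicitly leave unresolved (``the principal obstacle lies precisely here''). The long exact sequence of the pair $(\M_{0,2g+2},\dmog)$ plus Keel's presentation does not by itself produce the required class: for $g$ odd, for instance, $H^g(\M_{0,2g+2})=0$, so everything hinges on showing that $H^{g-1}(\M_{0,2g+2})\to H^{g-1}(\dmog)$ fails to be surjective, and computing $H^{\bullet}(\dmog)$ is exactly as hard as the original problem. The paper's proof, which occupies all of Section~\ref{sec:cohomology}, does use the weight argument you gesture at --- purity forces $H^g_c(\mog{0})\cong {}_gF_2^{-g+1,2g-1}$ in the truncated stratification spectral sequence --- but the substantive step is an explicit operadic computation: the $E_1$-terms are identified with spaces of words in a free Lie superalgebra via the gravity operad, and a nonzero class is exhibited as $d_1$ applied to the generator $B(ab^g)$ of the one-dimensional invariant space $V_{g,g}$ attached to a tree with $g$ edges lying \emph{outside} the good locus; its image lands in ${}_gF_1^{-g+1,2g-1}$, is automatically a $d_1$-cocycle there, cannot be a coboundary because its source is not in the truncation, and a Lyndon-basis computation shows it is nonzero. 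Neither this argument nor any substitute for it appears in your proposal, so the proof is incomplete at its central point.
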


As a consequence we have:
\begin{theorem} \label{thm:upper}
  $\ce\left(\mg{0}\right) \geq g-1$
\end{theorem}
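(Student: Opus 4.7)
The plan is to show that Theorem \ref{thm:upper} is an essentially formal consequence of Lemma \ref{lem:main} together with the definition of cohomological excess. The strategy is to exhibit a single Zariski closed subset $W \subset \mg{0}$ for which $\ccd(W) - \dim W \geq g-1$, and the natural candidate is $W = \hg{0}$ itself.

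First, I would verify that $\hg{0}$ is Zariski closed in $\mg{0}$. Since $\H_g$ is closed in $\M_g$ (it is the closure of the hyperelliptic locus, cut out by an explicit condition on stable curves) and $\mg{0}$ is open in $\M_g$, the intersection $\hg{0} = \mg{0} \cap \H_g$ is closed in $\mg{0}$, so it is admissible in the supremum defining $\ce(\mg{0})$.

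Next, I would compute the two numerical quantities. The dimension of $\hg{0}$ equals $\dim \H_g = 2g-1$ (since $\hg{0}$ is open and dense in $\H_g$). By Lemma \ref{lem:main}, the constructible sheaf $\L$ on $\hg{0}$ satisfies $H^{3g-2}(\hg{0},\L) \neq 0$, and this forces $\ccd(\hg{0}) \geq 3g-2$ directly from the definition of cohomological dimension for constructible sheaves.

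Combining these, $\ce(\mg{0}) \geq \ccd(\hg{0}) - \dim \hg{0} \geq (3g-2) - (2g-1) = g-1$, which is exactly the claim. In this argument there is no real obstacle: the entire content of the theorem has been packaged into Lemma \ref{lem:main}, whose proof is where all the work lies (constructing $\L$ as a pushforward from the $S_{2g+2}$-cover $\M_{0,2g+2}$ and producing an explicit non-vanishing cohomology class in degree $3g-2$). The deduction of Theorem \ref{thm:upper} itself is just a matter of reading off the inequality from the definition of $\ce$.
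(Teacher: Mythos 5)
Your proposal is correct and is exactly the deduction the paper intends: the theorem is stated as an immediate consequence of Lemma~\ref{lem:main}, using $\hg{0}$ as the closed subset $W$ with $\dim\hg{0}=2g-1$ and $\ccd(\hg{0})\geq 3g-2$, so that $\ce(\mg{0})\geq (3g-2)-(2g-1)=g-1$. The paper even records the same arithmetic $3g-2=(g-1)+\dim\hg{0}$ in the introduction, so there is nothing to add.
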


\noindent \emph{Remark.} By Conjecture~\ref{conj:ce} $\ce(\mg{0}) 
\leq g-1$, so then Theorem~\ref{thm:upper} shows that $\ce(\mg{0}) =
g-1$. It is not hard to see that $\asn(\hg{k}) \leq g-1+k$, hence
this result also shows that $\asn(\hg{0}) = g-1$.

\section{Combinatorial Preliminaries}

Here we recall some definitions from graph theory, explained in
greater detail in Getzler and Kapranov \cite[Section~2]{GK1}.

\begin{definition} 
  A \textbf{graph} $G$ is a tuple $(F,V,\sigma)$, where
  \begin{enumerate}
  \item $F(G)$ is the set of \textbf{flags} of $G$;
  \item $V(G)$ is a partition of $F(G)$, whose parts are called
    \textbf{vertices}.
  \item $\sigma: F(G) \to F(G)$ is an involution.
    \qedhere
  \end{enumerate}
\end{definition}

The fixed points of $\sigma$ are called \textbf{leaves} and the set of
all leaves is denoted by $L(G)$. The orbits of size $2$ of $\sigma$
are called \textbf{edges} and the set of all edges is denoted by
$E(G)$. Let
\begin{equation*}
  F(v) = \{ f \in F(G) |\ f \in v\}
\end{equation*}
be the set of flags incident on the vertex $v\in V(G)$.

A graph $G$ has a geometric realization $|G|$, which is the
one-dimensional cell complex with 1-cells $[0,1]\times F(G)$, modulo
the identifications $0\times f\sim 0\times f'$ if $f,f'\in F(v)$, and
$1\times f\sim1\times\sigma(f)$.

For example, the geometric realization of the graph
\begin{equation*}
  G = (\{1,\ldots,9\},\{\{1,4,6,8\},\{2,3,5,7,9\}\},(45)(67)(89))   
\end{equation*}
is shown in Figure~\ref{fig:GR}.

\begin{figure}[h]
\centering
\includegraphics[scale=1]{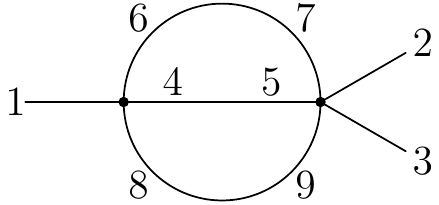}
\caption{The geometric realization of $G$}
 \label{fig:GR}
\end{figure}

Let $b_i(G)= \dim H_i(G,\C)$, for $i=0,1$. We only consider connected
graphs, that is graphs $G$ with $b_0(G)=1$. If $G$ is connected, the following
equality holds:
\begin{equation*}
  b_1(G) = |V(G)| - |E(G)| + 1 .
\end{equation*}
A \textbf{tree} is a graph $T$ with $b_0(T)=1$ and $b_1(T)=0$, that
is $|T|$ is connected and simply connected.

We consider graphs along with \textbf{labeling} $g:V(G) \to \Z_{\geq
  0}$ of the vertices. The number $g(v)$ is the genus of the vertex
$v$. The genus of the graph $G$ is
\begin{equation*}
  g(G) = \sum_{v\in V(G)} g(v) + b_1(G) .  
\end{equation*}

A graph $G$ is \textbf{stable} if its vertices satisfy the inequality:
\begin{equation*}
  2g(v)-2+|F(v)| > 0 .  
\end{equation*}
A graph of genus $g$ and $n$ leaves will be said to be of type $(g,n)$. 
For example, in Figure~\ref{fig:GR}, if both the vertices of the graph
have genus $0$, then the graph is stable of type $(2,3)$.

The \textbf{stabilization} of a labelled graph $G$ is constructed by
deleting vertices $v$ of $G$ of genus $0$ containing one or two flags.

A numbering of leaves is a bijection $L(G) \to [n]$, where
$[n]=\{1,\ldots,n\}$. A graph with a numbering of its leaves is called
a numbered graph.

Two numbered graphs are isomorphic if there is an isomorphism of the
underlying graphs that preserves the genus of vertices and the
numbering of leaves. Let $\Gamma(g,n)$ be the isomorphism classes
of stable graphs of type $(g,n)$. $\Gamma(g,n)$ is finite, as it has  
been proved in \cite[Lemma~2.16]{GK1}.

Let $M_{g,n}$ be the (coarse) moduli space of smooth genus $g$
algebraic curves over $\C$ with $n$ marked points, and let $\M_{g,n}$
be its Deligne-Mumford compactification, the moduli space of stable
curves of arithmetic genus $g$ with $n$ marked points.

To each stable curve of genus $g$ with $n$ marked points we associate
a stable graph of type $(g,n)$, called the \textbf{dual graph}: the
vertices of the dual graph correspond to the irreducible components of
the curve, each labelled by the geometric genus of the corresponding
component, the edges of the graph correspond to nodes, and the leaves
correspond to the marked points.

We have a stratification of $\M_{g,n}$ corresponding to isomorphism
classes of the dual graphs in $\Gamma(g,n)$, as explained in
\cites{GK1,V}. For an isomorphism class in $\Gamma(g,n)$ choose a
representative $G$. Let
\begin{equation*}
  M_G = \{ [C] \in \M_{g,n} \mid \text{ dual graph of } C \text{ is
    isomorphic to } G\}.
\end{equation*}
$M_G$ is open in its closure $\M_G$, and the set of subvarieties
$\{M_G\}$ as $G$ varies over isomorphism classes of graphs in
$\Gamma(g,n)$ stratifies $\M_{g,n}$.

There is a partial order $<$ on $\Gamma(g,n)$, $[G] < [G']$ if $G'$
can be obtained from a graph isomorphic to $G$ by contracting a subset
of the edges and relabelling the vertices (genus of a vertex of $G'$ is
the genus of the sub-graph which is the pre-image of the vertex, see
\cite[Section~2]{GK1}). Then $[G]<[G']$ if an only if $\M_G \subset
\M_{G'}$.

\section{Hyper-elliptic locus}
 
A hyperelliptic curve of genus $g$ is a smooth algebraic curve which
admits a degree 2 map to $\P^1$ ramified over $2g+2$ points. The locus
$H_g$ is the subvariety of $M_g$ parametrizing hyperelliptic curves of
genus $g$ and $\H_g$ is its closure in $\M_g$. There is an isomorphism:
\begin{equation*}
  \H_g \cong \M_{0,2g+2}/S_{2g+2}
\end{equation*}
Here, $S_n$ is the symmetric group on $n$ letters and it acts on
$\M_{0,n}$ by permuting the marked points.

Let us recall how the isomorphism is obtained. Let $\Hur_{g,d}$ be the
Hurwitz space parametrizing degree $d$ simply branched covers of
$\P^1$ of genus $g$. By simple branching we mean each fiber has at
least $d-1$ points. Let $\oHur_{g,d}$ be its compactification by
admissible covers, as in \cite[Section~4]{HM2}. By Riemann-Hurwitz
there are $r = 2g+2d-2$ points over which ramification occurs. We have
two maps; $\phi: \oHur_{g,d} \to \M_{0,r}/S_r$ by remembering only the
points in $\P^1$ over which branching occurs, and $\psi : \oHur_{g,d}
\to \M_g$. The admissible cover may not be a stable curve, but we can
stabilize it to obtain a genus $g$ curve and thus obtain $\psi$.

In case of $d=2$ and $r=2g+2$, $\phi$ is an isomorphism, and $\psi$ an
embedding onto $\H_g$, giving a very explicit description of
$\H_g$. Let us denote by $\pi: \M_{0,2g+2} \to \H_g$, the quotient map
to $\M_{0,2g+2}/S_{2g+2}$ followed by the isomorphism.

\subsection{Strata} 
\label{subsec:strata}

Associated to a curve $C$, such that $[C] \in \H_g$, are two combinatorial
objects. The first is the dual graph of $C$, which is a stable graph
of type $(g,0)$. The second is the dual graph of a curve $D$ such that
$[D] \in \pi^{-1}([C])$, which is a stable graph of type
$(0,2g+2)$. Of course if $[D'] \in \pi^{-1}([C])$ is another pre-image
then $[D'] = \sigma[D]$ for some permutation $\sigma \in
S_{2g+2}$. Hence, the dual graph of $D'$ is the same as the dual graph
of $D$ but with a renumbering of the leaves.

In fact, if we take the quotient $\Gamma(0,2g+2)/S_{2g+2}$, then the
graphs correspond to the strata of $\M_{0,2g+2}/S_{2g+2}$. These are
the graphs that we obtain if we forget the numbering of the
leaves. The following algorithm describes how to obtain the dual graph
of $\pi([D])$ from the dual graph of $D$. This defines a function
$\Gamma(0,2g+2) \to \Gamma(g,0)$ which as described factors through
$\Gamma(0,2g+2)/S_{2g+2}$, and in fact $\Gamma(0,2g+2)/S_{2g+2} \to
\Gamma(g,0)$ is injective as will be clear from the algorithm. But 
before giving the algorithm we have the following definitions:

Let $T=(F,V,\sigma)$ be a stable graph of type $(0,2k)$.  The
\textbf{parity} $p: F(T) \to \Z/2$ is a function satisfying
$p\circ\sigma = p$, so that both flags of an edge have same
parity. (We say that a flag is even or odd according to whether its
parity is $0$ or $1$: in drawing graphs, the even edges will be
dashed.) The leaves of $T$ are odd. The parity of an edge $e$ is
determined as follows: deleting $e$ produces two connected graphs
$G_1$ and $G_2$, both of which have either an even or an odd number of
leaves, since the total number of leaves must be $2k$; the edge $e$ is
even or odd accordingly.

\begin{definition}
  The \textbf{ramification number} $\rho(v)$ of a vertex $v \in V(T)$
  is the number of its flags that are odd
  \begin{equation*}
    \rho(v) = |\{f \in F(v) \mid p(f)=1\}| .
    \qedhere
  \end{equation*}
\end{definition}

\begin{algorithm}
  \label{Algo}
  Given a tree $T$ corresponding to a curve $C$ in $\M_{0,2g+2}$, the
  dual graph of $\pi([C])\in\M_g$ is the stabilization of the graph
  $G$ defined as follows:
  \begin{itemize}
  \item There are two edges in $G$ for each even edge of $T$, and one
    edge in $G$ for each odd edge of $T$. (The leaves of $T$ do not
    contribute flags to $G$.)
  \item A vertex $v$ of $T$ contributes a single vertex to $G$, of
    genus $(\rho(v)-2)/2$, unless $\rho(v)=0$, in which case it
    contributes two vertices of genus $0$.
    \qedhere
  \end{itemize}
\end{algorithm}

\begin{figure}[ht]
\centering
 \includegraphics[scale=1]{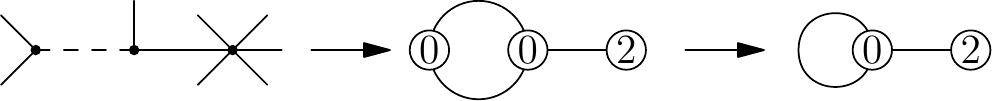}
 \includegraphics[scale=1]{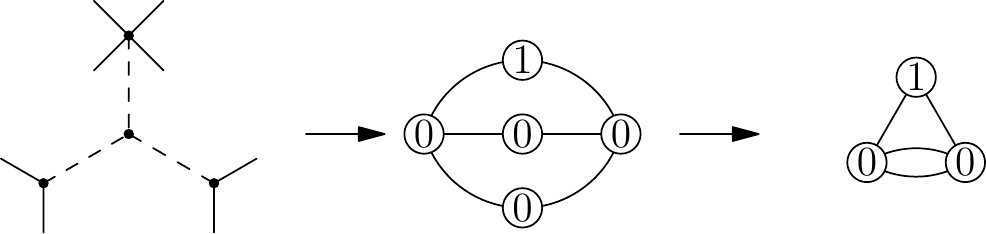}
\caption{Illustrations of the Algorithm}
\label{fig:Algorithm}
\end{figure}

Figure~\ref{fig:Algorithm} illustrates the algorithm. The left-most
graphs correspond to dual graphs of curves in $\M_{0,2g+2}$ (since all
vertices are genus 0 there is no need to label them). The middle
graphs are the dual graphs of the admissible covers and the rightmost
graphs are the graphs of the images in $\H_g$. Now let us see why the
above algorithm works. Most of this is in fact explained in
\cite[Section~3-G]{HM1}.

Let $C$ be a curve in $\M_{0,2g+2}$ and $f: C' \to C$ be the
admissible double covering. Assume that $C$ has 2 irreducible
components $C_1$ and $C_2$: the more general case is only notationally
more complicated.  If the node connecting the components is an odd
node, then both components have an odd number of marked points. Also
$f:f^{-1}(C_i) \to C_i$ are actual branched double covers, so by
Riemann-Hurwitz there must be even number of branch points and by
definition of admissible cover, there must be ramification over the
node. This shows that there is ramification over the odd nodes,
whereas similar reasoning shows that the even nodes have two pre-images
in the admissible cover.

It is easy to see that each $f^{-1}(C_i)$ is smooth and has Euler
characteristic $4-\rho(C_i)$, where $\rho$ is the branching number.
The rest is self-explanatory.

\subsection{A Filtration}

Graber and Vakil \cite{GV} have defined a filtration of $\M_g$ by open
subsets $\mg{k}$ corresponding to stable genus $g$ curves with at most
$k$ rational components (irreducible components of geometric genus
0). This filtration induces a filtration $\hg{k}$ on $\H_g$, and hence,
on $\M_{0,2g+2}$ through the map $\pi:\M_{0,2g+2} \to \H_g$. Let us
determine which strata of $\M_{0,2g+2}$ are in $\mog{k} :=
\pi^{-1}\left(\hg{k}\right)$.

For a tree $T$ of type $(0,2g+2)$, let the \textbf{edge-valence} of a 
vertex $v \in V(T)$ be the number $\nu(v) = |F(v)\setminus L(T)|$,
that is the number of edges of that vertex.

Call a vertex $v\in V(T)$ \textbf{internal} if $\nu(v)>1$; otherwise it 
is \textbf{external}.

\begin{proposition}
  \label{prop:admissible}
  Let $C$ be a curve in $\M_{0,2g+2}$ with corresponding dual graph
  $G$. Then the image $\pi([C]) \in \H_g$ has a rational component if
  and only if $G$ has an internal vertex $v$ with $\rho(v) \leq
  2$. Furthermore, the number of rational components of $\pi([C])$ is
  given by
  \begin{equation*}
    2\,|\{ v \in V(G) \mid \rho(v)=0 \}| +     |\{v \in V(G) \mid
    \textup{$v$ internal and $\rho(v) =2$} \}| .
    \qedhere
  \end{equation*}
\end{proposition}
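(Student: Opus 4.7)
The plan is to apply Algorithm~\ref{Algo} to $G$, denote the resulting intermediate graph by $\widetilde{G}$, and identify rational components of $\pi([C])$ with the genus-$0$ vertices of the stabilization of $\widetilde{G}$. For $v\in V(G)$, write $\ell(v)$, $o(v)$, $e(v)$ for the numbers of leaves, odd edges and even edges incident to $v$, so that $\rho(v)=\ell(v)+o(v)$, $\nu(v)=o(v)+e(v)$, and $|F(v)|=\ell(v)+o(v)+e(v)\geq 3$ by stability of $G$. A first observation is that $\rho(v)$ is always even: since the total number of leaves is $2g+2$ and each leaf is odd, a parity count at $v$ forces $\rho(v)\equiv 0\pmod 2$.

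By Algorithm~\ref{Algo}, genus-$0$ vertices of $\widetilde{G}$ arise in two ways: each $v$ with $\rho(v)=0$ contributes two, call them $v_1,v_2$; and each $v$ with $\rho(v)=2$ contributes one, call it $v'$. Any $v$ with $\rho(v)\geq 4$ contributes a single vertex of genus $(\rho(v)-2)/2\geq 1$, which is never a rational component. Thus rational components of $\pi([C])$ correspond precisely to those candidate genus-$0$ vertices of $\widetilde{G}$ that survive stabilization.

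Next I count flags of the candidates. An odd edge of $G$ at $v$ becomes a single edge in $\widetilde{G}$ contributing one flag at the lift of $v$, whereas an even edge becomes two edges contributing two flags in total---both at $v'$ when $\rho(v)\geq 2$, one at each of $v_1,v_2$ when $\rho(v)=0$. When $\rho(v)=0$, each lift has $|F(v_i)|=e(v)=|F(v)|\geq 3$, so both survive stabilization, accounting for the factor of $2$. When $\rho(v)=2$, the single lift has $|F(v')|=o(v)+2e(v)$; running through $(\ell,o)\in\{(0,2),(1,1),(2,0)\}$ with the stability bound $e(v)\geq 1$ yields $|F(v')|\geq 3$ except in the lone configuration $(\ell,o,e)=(2,0,1)$, which is precisely $\nu(v)=1$, i.e.\ $v$ external; in that case $v'$ is bivalent and is killed by stabilization.

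Finally I verify that removing this bivalent $v'$ does not cascade. Its two flags both come from the unique even edge of $G$ at $v$, so contracting $v'$ either replaces two parallel edges to a common neighbor by a loop, or replaces a path $w_1\!-\!v'\!-\!w_2$ with $w_1\neq w_2$ by a single edge $w_1\!-\!w_2$; either way each remaining vertex keeps its flag-valence, so no further genus-$0$ vertex becomes destabilized. Summing the contributions yields the formula. The ``if and only if'' criterion follows immediately: $\rho(v)=0$ forces $\ell(v)=o(v)=0$, hence $\nu(v)=e(v)=|F(v)|\geq 3$, so such $v$ is automatically internal; and then $\pi([C])$ has a rational component iff $G$ has an internal vertex with $\rho(v)\in\{0,2\}$, i.e.\ $\rho(v)\leq 2$. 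The one step requiring genuine care is the flag-count case analysis for $\rho(v)=2$; the rest is bookkeeping.
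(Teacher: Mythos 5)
Your argument is correct and follows essentially the same route as the paper's proof: apply Algorithm~\ref{Algo}, observe that only vertices with $\rho(v)=0$ or $\rho(v)=2$ produce genus-$0$ candidates, and decide survival under stabilization by counting flags on the lifts, with $\nu(v)=1$, $\rho(v)=2$ as the one discarded case. Your extra bookkeeping (the explicit count $|F(v')|=o(v)+2e(v)$ and the check that deleting the bivalent lift does not cascade) only makes explicit what the paper leaves implicit.
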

\begin{proof}
  From Algorithm~\ref{Algo}, it is clear that the only vertices of $G$
  that contribute a genus $0$ vertex to the dual graph of the
  admissible cover are those vertices $v\in V(G)$ with $\rho(v) = 0$
  or $\rho(v) =2$.

  If $\rho(v) = 0$, then $v$ meets no leaves, so it is internal. Hence
  in the admissible cover it lifts up to $2$ vertices of genus $0$
  each of which is connected to at least $3$ edges, and hence survives
  stabilization. Hence $v$ contributes $2$ vertices of genus $0$ to
  the stabilization of the admissible cover.

  On the other hand if $\rho(v) = 2$, then the vertex lifts up to one
  vertex of genus $0$ in the admissible cover. If $\nu(v) =1$, then
  $v$ meets two leaves and an even edge. But then the vertex
  corresponding to it in the admissible cover has just 2 edges on it,
  and disappears after stabilization. If $\nu(v) >1$, then the
  corresponding vertex in the admissible cover meets at least $3$
  edges and survives stabilization.
\end{proof}

The proposition above tells us exactly which curves belong to
$\mog{k}$. The following bound will be useful later. For a stable
pointed curve $C$, let $\delta(C)$ denote the number of nodes of $C$.

\begin{proposition}
  \label{prop:nodes}
  For a curve $C$ in $\M_{0,2g+2}$, if $[C] \in \mog{k}$ then
  $\delta(C) \leq g+k-1$.
\end{proposition}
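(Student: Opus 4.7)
The plan is to analyze the dual tree $T$ of $C$—which is a tree of type $(0,2g+2)$ with $|V(T)|=\delta(C)+1$—by partitioning its vertices according to the ramification number $\rho$. Since $\rho(v)$ is always even, I would split $V(T)$ into four disjoint classes: Type A with $\rho(v)=0$ (necessarily internal, since a leaf at $v$ would contribute an odd flag); Type B internal with $\rho(v)=2$; Type C external with $\rho(v)=2$; and Type D with $\rho(v)\geq 4$. Denote their cardinalities by $n_A,n_B,n_C,n_D$, so that $n_A+n_B+n_C+n_D=\delta(C)+1$. By Proposition~\ref{prop:admissible} the number of rational components of $\pi([C])$ equals $r:=2n_A+n_B$, and the hypothesis $[C]\in\mog{k}$ gives $r\leq k$.

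The first step is a flag-counting inequality. Counting odd flags in two ways yields $\sum_{v\in V(T)} \rho(v) = (2g+2)+2e_o$, where $e_o$ denotes the number of odd edges, since every leaf is odd and each odd edge contributes two odd flags. Applying $\rho(v)\geq 4$ on Type D and $\rho(v)\geq 2$ on Types B and C, and then eliminating $n_D$ via the vertex-count identity, a short rearrangement gives
\[
\delta(C) + e_e \leq g - 1 + r + n_C,
\]
where $e_e := \delta(C)-e_o$ is the number of even edges.

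The second and crucial step is to show $n_C \leq e_e$. A Type C vertex has $|F(v)|=3$—two leaves and one flag of a single edge, and that edge must be even since the external side carries an even number of leaves. Assigning to each Type C vertex its unique incident even edge defines a map from Type C vertices to even edges of $T$. If two distinct Type C vertices were incident to the same edge, then $T$ would consist of exactly those two external vertices joined by that single edge, forcing $|L(T)|=4$ and hence $g=1$; this is excluded by the standing assumption $g\geq 2$. So the map is injective, $n_C\leq e_e$, and combining with the previous inequality yields $\delta(C) \leq g-1+r \leq g+k-1$.

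The main obstacle I anticipate is finding the right refinement of the flag count: stability $|F(v)|\geq 3$ alone only yields the crude bound $\delta(C)\leq 2g-1$, so one must genuinely exploit the parity constraint $\rho(v)\geq 4$ on Type D together with the injectivity argument for Type C vertices, which are ``invisible'' in the stabilized admissible cover and account precisely for the gap between $2g-1$ and $g-1+k$.
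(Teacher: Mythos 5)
Your proof is correct, but it takes a genuinely different route from the paper's. The paper works entirely on the image side: for the stable genus~$g$ curve $\pi([C])$ with components of geometric genera $g_i$, the formula $g=\sum_i(g_i-1)+\delta+1$ gives $\delta(\pi([C]))=(g-1)-\sum_i(g_i-1)\le g+k-1$ once there are at most $k$ rational components, and the proposition follows from the observation, read off from Algorithm~\ref{Algo}, that $\pi([C])$ has at least as many nodes as $C$. You instead stay on the source tree $T$ and never invoke the genus formula: I checked your rearrangement (from $\sum_v\rho(v)=(2g+2)+2e_o\ge 2n_B+2n_C+4n_D$ and $n_D=\delta+1-n_A-n_B-n_C$ one gets $4\delta\le 2g-2+2e_o+4n_A+2n_B+2n_C$, and substituting $e_o=\delta-e_e$ yields your displayed inequality) and the injection from external $\rho=2$ vertices into even edges; both are sound, and Proposition~\ref{prop:admissible} enters only to identify $2n_A+n_B$ with the number of rational components. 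Your explicit appeal to $g\ge 2$ is harmless, as that is a standing assumption of the paper (and for $g=1$ the statement degenerates anyway). What the paper's argument buys is brevity, at the cost of the slightly delicate claim that stabilization of the admissible cover does not decrease the node count --- the external $\rho=2$ vertices are exactly where two lifted edges collapse to one, so this does require a check. Your argument buys a self-contained combinatorial proof in which those same vertices appear explicitly as the slack $n_C\le e_e$, at the cost of length.
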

\begin{proof}
  For a stable curve of genus $g$ which has $r$ irreducible
  components, $\delta$ nodes and the geometric genera of the
  irreducible components are $g_i$ we have the following equality:
  \begin{equation*}
    g = \sum_{i=1}^r (g_i-1) +\delta +1
  \end{equation*}
  This yields $\delta = (g-1) - \sum_{i=1}^r (g_i-1) $. Hence, if the
  curve has at most $k$ rational components, then $\delta \leq
  g+k-1 $. By Algorithm~\ref{Algo}, for a curve $D$ in $\M_{0,2g+2}$,
  $\pi([D])$ has at least as many nodes as $D$.
\end{proof}

\subsection{A Constructible Sheaf.}
Consider the constant sheaf $\C$ on $\M_{0,2g+2}$. Let $\L := \pi_*
\C$. (If the action of $S_{2g+2}$ on $\M_{0,2g+2}$ were free, $\L$
would be a local system, but in any case it is a constructible sheaf.)

Consider the restriction of $\L$ on $\hg{0}$ and denote it by $\L$ as
well. Then,
\begin{equation*}
  H^{\bullet}(\hg{0},\L) \cong H^{\bullet}(\mog{0}, \C)
\end{equation*}
By Poincar\'{e} Duality, Lemma~\ref{lem:main} is a corollary of the
following lemma, proved in Section~\ref{sec:cohomology}.
\begin{lemma}
  \label{lem:coh}
  The cohomology group $H^g_c(\mog{0})$ is non-zero, and
  $H^k_c(\mog{0}) =0$ for $k<g$.
\end{lemma}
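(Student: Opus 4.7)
The plan is to stratify $\mog{0}$ by dual graph type and compute $H^{\bullet}_c(\mog{0})$ via the spectral sequence of the stratification. For an admissible tree $G$ of type $(0,2g+2)$, in the sense of Proposition~\ref{prop:admissible}, the stratum $M_G \subset \mog{0}$ has complex dimension $2g-1-|E(G)|$ and is the quotient of $\prod_{v\in V(G)} M_{0,F(v)}$ by $\Aut(G)$. Filtering by codimension, with $V_p = \bigcup_{|E(G)|\leq p} M_G$ open in $\mog{0}$ and $S_p = V_p \setminus V_{p-1}$ the disjoint union of codimension-$p$ strata, gives a spectral sequence
\[
  E_1^{p,q} \;=\; \bigoplus_{\substack{G \text{ admissible}\\ |E(G)|=p}} H^{p+q}_c(M_G) \;\Longrightarrow\; H^{p+q}_c(\mog{0})
\]
with differentials $d_r \colon E_r^{p,q} \to E_r^{p+r, q-r+1}$.

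The vanishing of $H^k_c(\mog{0})$ for $k<g$ rests on two ingredients. First, $M_{0,n}$ is affine of complex dimension $n-3$, so Artin vanishing together with Poincar\'{e} duality gives $H^j_c(M_{0,n}) = 0$ for $j < n-3$; K\"{u}nneth and passage to $\Aut(G)$-invariants then yield $H^j_c(M_G) = 0$ for $j < \dim_{\C} M_G = 2g-1-|E(G)|$. Second, Proposition~\ref{prop:nodes} bounds $|E(G)| \leq g-1$ on every admissible stratum. Any nonzero $E_1^{p,q}$ therefore satisfies $p+q \geq 2g-1-p$ and $p \leq g-1$, which forces $p+q \geq g$.

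These same inequalities isolate $E_1^{g-1,1}$ as the only possibly nonzero term in total degree $g$. I would exhibit a contribution by the caterpillar tree $G_0$ on $g$ vertices $v_1,\ldots,v_g$ arranged in a path, with three leaves at each of $v_1, v_g$, two leaves at each interior vertex, and leaves numbered consecutively ($\{1,2,3\}$ at $v_1$, $\{4,5\}$ at $v_2$, and so on up to $\{2g,2g+1,2g+2\}$ at $v_g$). A direct check shows that every edge is odd (each cut separates $2i+1$ leaves from the rest), $\rho(v) = 4$ at every vertex, and $G_0$ is admissible; the specified numbering kills all non-trivial automorphisms of $G_0$, hence $M_{G_0} \cong (M_{0,4})^g$ and
\[
  H^g_c(M_{G_0}) \;\cong\; H^1_c(M_{0,4})^{\otimes g} \;\cong\; (\C^2)^{\otimes g} \;\neq\; 0.
\]

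It remains to check that $E_1^{g-1,1}$ survives to $E_\infty$. Outgoing differentials $d_r$ land in column $p+r \geq g$, which is zero because $\mog{0}$ has no strata of codimension $\geq g$; incoming differentials come from $E_r^{g-1-r,r}$ of total degree $g-1$, whose contributing strata have complex dimension $g+r > g-1$, so $H^{g-1}_c$ there is killed by the affine/K\"{u}nneth vanishing. Thus $E_\infty^{g-1,1} \cong E_1^{g-1,1}$ is the unique graded piece of $H^g_c(\mog{0})$, and the caterpillar stratum forces $H^g_c(\mog{0}) \neq 0$. The main subtlety is the $\Aut(G)$-quotient structure of the strata, which I sidestep by choosing $G_0$ with trivial automorphism group; after that, the entire argument reduces to dimension bookkeeping driven by the bound in Proposition~\ref{prop:nodes}.
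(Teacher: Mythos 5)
Your setup and the vanishing half of the argument match the paper: the same stratification of $\mog{0}$ by good dual graphs, the same affine-plus-K\"unneth bound $H^j_c(M_G)=0$ for $j<\dim_{\C}M_G=2g-1-|E(G)|$, and the same use of Proposition~\ref{prop:nodes} to force every $E_1$ term into total degree $\geq g$. The identification of $\bigoplus_{|E(G)|=g-1}H^{g}_c(M_G)$ as the only $E_1$ term in total degree $g$ is also correct, and your caterpillar tree is a legitimate non-zero contributor to it.

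The gap is in the survival argument, and it comes from having the differential pointing the wrong way. In the spectral sequence computing compactly supported cohomology of a stratified space, $d_1$ is built from the connecting homomorphism $H^k_c(Z)\to H^{k+1}_c(U)$ of a pair (closed $Z$, open complement $U$): it goes from \emph{higher}-codimension strata to \emph{lower}-codimension strata while raising total degree by one. (This is visible in the paper's resolution \eqref{eq:res}, which runs from the strata with the most edges down to $M_{0,m}$; or test it on $\C=\C^*\sqcup\{0\}$, where $H^0_c(\{0\})$ must cancel against $H^1_c(\C^*)$.) Consequently the outgoing differential from your total-degree-$g$ term does not land in an empty column of codimension $\geq g$; it lands in $\bigoplus_{|E(G)|=g-2}H^{g+1}_c(M_G)$, the bottom compactly supported cohomology of the $(g+1)$-dimensional affine strata, which is Poincar\'e dual to their top ordinary cohomology and is certainly non-zero. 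Since there are no good strata with $g$ edges, the incoming differential does vanish, so $H^g_c(\mog{0})$ is exactly the kernel of this outgoing $d_1$ --- but nothing in your argument shows that this kernel is non-zero, and a priori the map could be injective. That is precisely the hard point of the paper: it identifies the relevant $E_1$ terms with components of a free Lie superalgebra, takes the class $\omega_g=B(ab^g)$ supported on the non-good stratum $T_{g,g}$ with $g$ edges in the untruncated spectral sequence for $\M_{0,2g+2}$, and shows by an explicit Lyndon-basis computation that $d_1(\omega_g)$ is a non-zero element of $\bigoplus_{|E(G)|=g-1}H^{g}_c(M_G)$ which is automatically annihilated by the next differential. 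Your proposal is missing this entire step, and the dimension bookkeeping alone cannot supply it.
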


\section{Cohomology Computations} \label{sec:cohomology}

\subsection{A Spectral Sequence.} \label{sec:ss} The spectral sequence
we describe here is ``dual'' to the spectral sequence of Deligne
\cite[section 1.4]{D} for mixed Hodge theory of smooth
quasi-projective varieties (see Getzler \cite[section 3.7]{Get1}).

Let $X$ be a smooth projective variety over $\C$ of dimension $n$, and
$D$, a simple normal crossings divisor. By that we mean $D =
D_1\cup\ldots\cup D_N$, where each $D_i$ is a co-dimension 1 smooth
sub-variety and all intersections of $D_i$ are transverse. Let
\begin{equation*}
  X =X_0\supset X_1\supset \ldots\supset X_n \supset X_{n+1}=
  \emptyset
\end{equation*}
be the following filtration on $X$: $X_1 = D$, and
\begin{equation*}
  X_k = \bigcup_{\substack{|I|=k \\I \subset \Set{1}{N}}} \bigcap_{i\in I} D_i
\end{equation*}
Let $X_k^\circ = X_k\setminus X_{k+1}$. Then we have
$H^{\bullet}(X_k,X_{k+1}) \cong H^{\bullet}_c(X_k^\circ)$, where
$H^{\bullet}$ denotes cohomology and $H^{\bullet}_c$ compactly
supported cohomology with complex coefficients.

Consider the spectral sequence associated to this filtration on $X$. We have 
\begin{equation*}
  E_1^{p,q} = H^{p+q}(X_{-p},X_{-p+1}) = H^{p+q}_c(X_{-p}^\circ)
\end{equation*}
and the differential $d_1$ is given by the composition of maps
\begin{equation*}
  \xymatrix{
    E_1^{p,q}\ar[rr]^{d_1}\ar@{=}[d] & & E_1^{p+1,q}\ar@{=}[d] \\
    H^{p+q}(X_{-p},X_{-p+1}) \ar[r]^-\imath & H^{p+q}(X_{-p})
    \ar[r]^-{\delta} & H^{p+q+1}(X_{-p-1},X_{-p})}
\end{equation*}
where $\imath$ and $\delta$ are the maps in the long exact sequence of
a pair $W \subset Z$ as follows:
\begin{equation*}
  \cdots\to H^{l-1}(W) \xrightarrow{\delta} H^l(Z,W)
  \xrightarrow{\imath} H^l(Z) \to \cdots
\end{equation*}
Since the filtration is finite the spectral sequence converges to
$H^{p+q}(X)$. Moreover, the vector spaces $E_1^{p,q}$ carry mixed Hodge
structures and the differential is a map of mixed Hodge
structures. The spectral sequence converges in the $E_2$ page and
$E_2^{p,q} = E_{\infty}^{p,q}$.

To apply this to our situation note that $X =\M_{0,m}$ is a smooth
projective complex variety and $D= \M_{0,m}\setminus M_{0,m}$ is a
simple normal crossings divisor. The set of (isomorphism classes of)
trees $\Gamma(0,m)$ can be partitioned into $\Gamma(0,m) =
\Gamma_0(0,n)\sqcup\ldots\sqcup\Gamma_{m-3}(0,m)$, where trees in
$\Gamma_k(0,m)$ have $k$ edges; then
\begin{equation*}
  X_k = \bigcup_{[T] \in \Gamma_k(0,m)} \M_T \quad \text{and} \quad
  X^\circ_k = \bigsqcup_{[T] \in \Gamma_k(0,m)} M_T .
\end{equation*}
As above we have a spectral sequence in the category of mixed
Hodge structures, with
\begin{equation}
  \label{eq:ss}
  _mE_1^{p,q} = \bigoplus_{[T] \in \Gamma_{-p}(0,m)}
  H^{p+q}_c(M_T)
\end{equation}
This spectral sequence tells us how to compute $H^{\bullet}(\M_{0,m})$
from the knowledge of $H^{\bullet}(M_{0,l})$ for all $l \leq m$.

The Hodge structure of $H^i(M_{0,l})$ is pure of weight $2i$ (Getzler
\cite{Get1}). Hence $H^i_c(M_{0,l})$ has a pure Hodge structure of
weight $2i-2(l-3)$ and $_mE_1^{p,q}$ carries a pure Hodge structure of
weight $2q -2(m-3)$. Since the cohomology $H^i(\M_{0,m})$ carries a
pure Hodge structure of weight $i$, and the odd cohomology is trivial,
we conclude that $H^{2i}(\M_{0,m})\cong {}_mE_2^{-(m-3-i),m-3+i}$ and
$_mE_2^{p,q}=0$ if $ q-p \neq 2(m-3)$. So we get a resolution of
$H^{2k}(\M_{0,m})$ as follows
%resolution
\begin{equation*}
  0 \to H^{2k}(\M_{0,m}) \to \bigoplus_{[T] \in \Gamma_{m-3-k}(0,m)}
  H^{2k}_c(M_T) \to \ldots \to H^{m-3+k}_c(M_{0,m}) \to 0
\end{equation*}
Taking duals and setting $j = m-3-k$, we get 
\begin{equation}
  \label{eq:res}
  0 \to H_j(M_{0,m}) \to \bigoplus_{[T] \in \Gamma_{1}(0,m)} H_{j-1}
  (M_T) \to \ldots \to \bigoplus_{[T] \in \Gamma_{j}(0,m)} H_0 (M_T)
  \to H_{2j}(\M_{0,m}) \to 0
\end{equation}

\subsection{Truncation.}
Now $\mog{0} \subset \M_{0,2g+2}$, so if we set $\partial\mog{0} =
\M_{0,2g+2}\setminus \mog{0}$, then
\begin{equation*}
  \quad H^{\bullet}_c(\mog{0}) \cong H^{\bullet}(\M_{0,2g+2},\dmog)
\end{equation*}
Let $C$ be a stable genus zero curve and $T$ its dual graph. From
Proposition~\ref{prop:admissible} we have $[C] \in \mog{0}$ if and
only if $\rho(v)>2$ for all internal vertices $v \in V(T)$, or, since
$\rho(v)$ is even (by Riemann-Hurwitz), $\rho(v) \geq 4$. Let us call
trees satisfying this condition \textbf{good} trees and denote the set
of isomorphism classes of good trees of type $(0,2k)$ by
$\Gamma(0,2k)^{0}$; then
\begin{equation*}
  \mog{0} = \bigsqcup_{[T] \in \Gamma(0,2g+2)^0} M_T .
\end{equation*}
Let $\Gamma_k(0,2g+2)^0$ be the isomorphism classes of good trees with
$k$ edges, so that
\begin{equation*}
  \Gamma(0,2g+2)^0 = \Gamma_0(0,2g+2)^0 \sqcup\ldots\sqcup
  \Gamma_{g-1}(0,2g+2)^0
\end{equation*}
The filtration on $\mog{0}$ gives a filtration on the singular
co-chains of the pair $(\M_{0,2g+2},\dmog)$, and we have the
associated spectral sequence:
\begin{equation*}
  {}_gF_1^{p,q} = \bigoplus_{[T] \in \Gamma_{-p}(0,2g+2)^0}
  H^{p+q}_c(M_T)
\end{equation*}
The differential here is the same as the differential of the previous 
spectral sequence.

We have the bounds
\begin{equation}
  \label{eq:bds}
  \text{${}_gF_1^{p,q} =0$ unless $1-g\le p\le 0$ and $2g-1\le q\le 4g-2$.}
\end{equation}
To see this, first note that by Proposition \ref{prop:nodes}, a good graph 
can have at most $g-1$ edges. This gives the bounds on $p$. Further when
$T$ has $r$ edges, $M_T$, is an affine variety of dimension $2g-1-r$. Hence, 
the compactly supported cohomology of $M_T$ is non-trivial in degrees 
$2g-1-r$, through $4g-2-2r$. This shows the bound on $q$.

Again the spectral sequence converges and $H^k_c(\mog{0}) \cong
\bigoplus_s {}_gF^{-s,k+s}_\infty$. As before, this is a spectral
sequence in the category of mixed Hodge structures.

From the above bounds \eqref{eq:bds}, it is clear that 
\begin{equation*}
  H^g_c(\mog{0}) \cong {}_gF_{\infty}^{-g+1, 2g-1} \cong
  {}_gF_2^{-g+1,2g-1} .
\end{equation*}
Also ${}_gF_2^{-g+1,2g-1}$ has a pure Hodge structure of weight $0$.

Moreover $H^k_c(\mog{0})=0$ for $k<g$ since ${}_gF_1^{p,q} =0$ if $q+p
< g$. Hence, to complete the proof of Lemma~\ref{lem:coh}, we just need
to show ${}_gF_2^{-g+1,2g-1} \neq 0$.

\subsection{A digression into Operads.}
Here we borrow notations and definitions from \cite{Get2}. Recall that
an $\S$-module $\V$ is a sequence of chain complexes
\begin{equation*}
  \{\V(n)\mid n\ge 0 \}
\end{equation*}
together with an action of $S_n$ on $\V(n)$. 

If $V$ is a chain complex, let $\Sigma V$ be its shift (sometimes
denoted $V[1]$). The gravity and hypercommutative operads \cite{Get2}
have as their underlying $\S$-modules
\begin{align*}
  \Grav(n) &=
  \begin{cases}
    \Sigma^{2-n}\sgn_n \otimes H_{\bullet}(M_{0,n+1}), & n\geq 2 \\
    0, & n<2 \\
  \end{cases} \intertext{and}
  \Hycomm(n) &=
  \begin{cases}
    H_{\bullet}(\M_{{0,n+1}}), & n\geq 2 \\
    0, & n<2
  \end{cases} .
\end{align*}

For an $\S$-module $\V$, the dual $\S$-module $\V^{\vee}$
is defined as
\begin{equation*}
  \V^{\vee}(n) = \Sigma^{n-2}\sgn_n\otimes \V(n)^{*} \, .
\end{equation*}
The double dual $(\V{}^\vee){}^\vee$ of an $\S$-module is naturally
isomorphic to $\V$.

Let
\begin{equation*}
  \V(n) = H_c^{\bullet} (M_{0,n})\, .
\end{equation*}
By Poincar\'{e} duality, $\Grav(n) \cong \V^{\vee}(n)$. So after
taking duals, we have $\V \cong \Grav^{\vee}$.

Summing the complexes
\begin{equation*}
  0 \to H_j(M_{0,m}) \to \bigoplus_{[T] \in \Gamma_{1}(0,m)} H_{j-1}
  (M_T) \to \ldots \to \bigoplus_{[T] \in \Gamma_{j}(0,m)} H_0 (M_T)
  \to 0
\end{equation*}
of \eqref{eq:res} placed in degrees $[j,2j]$, we get an $\S$-module
\begin{equation*}
  \mathcal{W}(n) =  \bigoplus_{[T] \in \Gamma(0,n)} H_{\bullet} (M_T) .
\end{equation*}
The cohomology of $\mathcal{W}(n)$ is isomorphic to $\Hycomm(n)$; 
this is just a restatement of the Koszul duality of $\Grav$ and $\Hycomm$, 
since $\mathcal{W}$ is the cobar construction for $\Grav$ (see \cite{Get2}).

A diagram chase shows that the differential $d_1$ in the spectral
sequence \eqref{eq:ss} is adjoint to the differential in the cobar
construction for $\Grav$.

\subsection{Proof of Lemma \ref{lem:coh}.}
As we already noted
\begin{equation*}
  H^g_c(\mog{0}) \cong {}_gF_{\infty}^{-g+1, 2g-1} \cong
  {}_gF_2^{-g+1,2g-1},
\end{equation*}
so the strategy of proof will be to show that $d_1 :
{}_gF_1^{-g+1,2g-1} \to {}_gF_1^{-g+2,2g-1}$ has a kernel. The
spectral sequence ${}_gF^{\bullet,\bullet}$ is a truncation of the
spectral sequence ${}_{2g+2}E^{\bullet,\bullet}$, as in
section~\ref{sec:ss}, and has the same differential in the first
page. We identify a subspace $V_{g,g}$ of ${}_{2g+2}E_1^{-g,2g-1}$ on
which the differential $d_1$ is non-trivial and show that the image is
inside ${}_gF_1^{-g+1,2g-1} \subset {}_{2g+2}E_1^{-g+1,2g-1}$. First
we define some specific trees which will be useful in the following
discussion.

For each $l$, $0 \leq l \leq g$ consider the tree $T_{l,g}$ of type
$(0,2g+2)$ defined as follows (see Figure~\ref{fig:graph}):
\begin{enumerate}
\item $T_{l,g}$ has vertices $v_0,v_1, \ldots , v_l$;
\item vertex $v_0$ has $2g-2l+2$ leaves and $v_i$ has 2 leaves for each
 $i>0$;
\item $v_0$ is connected to each $v_i$ for $i>0$  by an edge;
\item for $i>0$, the leaves of $v_i$ are numbered $2i-1,2i$ and the leaves of
  $v_0$ are $2l+1,\ldots,2g+2$. 
\end{enumerate}

\begin{figure}[h]
\centering
 \includegraphics[scale=1]{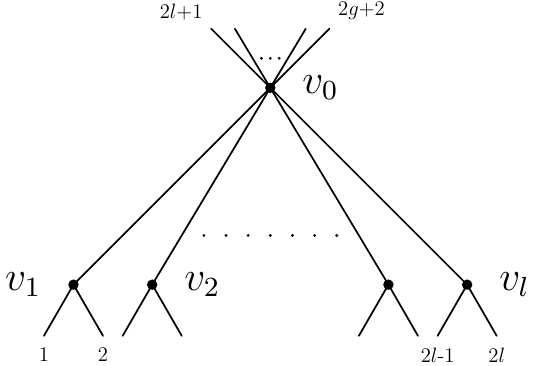}
\caption{The tree $T_{l,g}$} 
\label{fig:graph}
\end{figure}

Note that the stratum $M_{T_{l,g}}$ is isomorphic to $M_{0,2g-l+2}$ and 
has dimension $2g-l-1$. One should think of $M_{T_{l,g}}$ as the moduli 
space $M_{0,2g-l+2}$ with two sets of marked points, $l$ of them even and 
the rest odd. Let
\begin{equation*} 
  W_{l,g} =  H^{2g-1-l}_c(M_{T_{l,g}}) \qquad \text{for $l=0,
    \ldots,g$.}
\end{equation*}  

When $g=4$, and $l=2$, Figure~\ref{fig:curve} shows a curve with dual 
graph $T_{l,g}$, the admissible cover and its stabilization.
\begin{figure}[h]
\centering
 \includegraphics[scale=0.8]{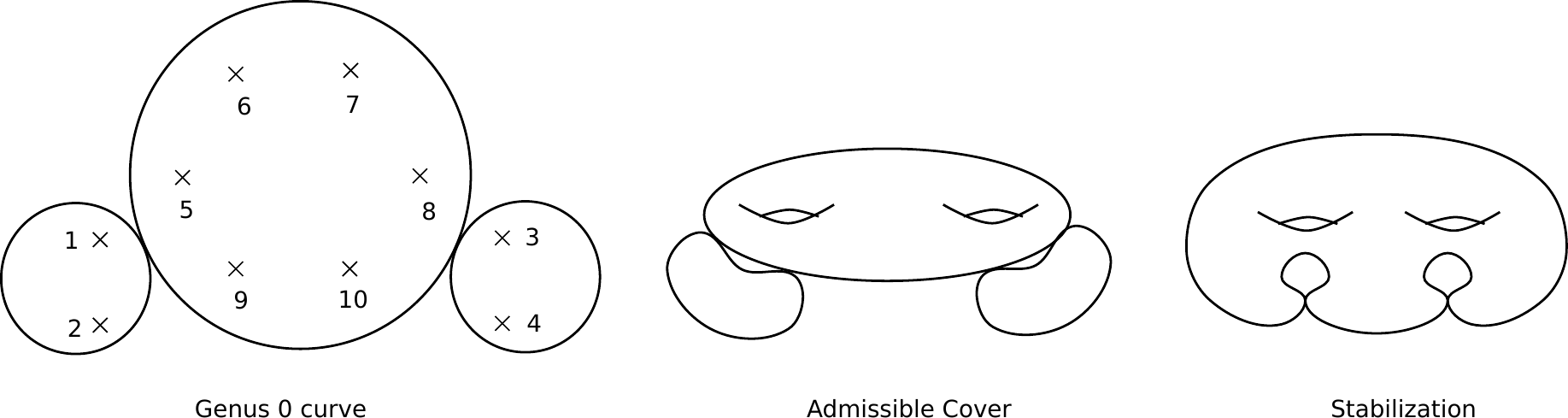}
\caption{A curve with dual graph $T_{2,4}$}  
\label{fig:curve}
\end{figure} 

The symmetric group $S_n$ acts on $M_{0,n+1}$ by permuting the first
$n$ marked points, and hence on $H^{\bullet}(M_{0,n+1})$.  We treat
$T_{l,g}$ as a rooted tree with the leaf $2g+2$ as the root. Then
\begin{equation*}
  \Aut(T_{l,g}) \cong S_{2g-2l+1}\times(S_l\wr S_2) \subset S_{2g+1}
\end{equation*}

$\Aut(T_{l,g})$ acts on $W_{l,g}$ and we have an induced
representation of $S_{2g+1}$
\begin{equation*}
  \Ind_{\Aut(T_{l,g})}^{S_{2g+1}} W_{l,g} .
\end{equation*}
(This corresponds to summing over the appropriate cohomology of the
strata corresponding to the rooted trees that are isomorphic to
$T_{l,g}$ after renumbering of the non-root leaves.)

\begin{definition} For $0\leq l \leq g$, we define the vector space 
$V_{l,g}$ to be the subspace of $W_{l,g}$ of invariants under the action 
of $\Aut(T_{l,g}) $
\begin{equation*} 
  V_{l,g} = \left( W_{l,g} \right)^{\mathrm{Aut}(T_{l,g})}\cong 
  \left( \Ind_{\Aut(T_{l,g})}^{S_{2g+1}} W_{l,g} \right)^{S_{2g+1}} .
\end{equation*}
\end{definition}

Recall the spectral sequences ${}_mE^{\bullet,\bullet}$ corresponding
to the cohomology of $\M_{0,m}$ and ${}_gF^{\bullet,\bullet}$
corresponding to the compactly supported cohomology of $\mog{0}$. Then
$V_{l,g} \subset {}_{2g+2}E_1^{-l,2g-1} $ and when $l\leq g-1$, we have
$V_{l,g} \subset {}_gF_1^{-l,2g-1}$.

We have the following diagram
% Ezra added the aligned environment to centre the equation label
\begin{equation}
  \label{comm-diag}
  \begin{aligned}
    \xymatrix@M=5pt{ _{2g+2}E^{-g,2g-1}_1 \ar[r]^-{d_1} &
      _{2g+2}E^{-g+1,2g-1}_1
      \ar[r]^-{d_1} & _{2g+2}E^{-g+2,2g-1}_1 \\
      V_{g,g} \ar@{^{(}->}[u] \ar[r]^{d_1} & V_{g-1,g} \ar@{^{(}->}[u]
      \ar[r]^{d_1} \ar@{_{(}->}[d] & V_{g-2,g}
      \ar@{^{(}->}[u] \ar@{_{(}->}[d] \\
      & _gF^{-g+1,2g-1}_1 \ar[r]^{d_1} & _gF^{-g+2,2g-1}_1\\
    }
  \end{aligned}
\end{equation}

To understand the vector spaces $V_{l,g}$, let us first analyse $W_{l,g}$. 
As shown in \cite{Get2}, as representations of $S_n$ we have
\begin{align}
  \label{eq:lie1}
  H^{n-2}_c(M_{0,n+1}) &\cong H_{n-2}(M_{0,n+1}) \\
  &\cong \sgn_n \otimes \Lie(n) . \notag
\end{align}

\begin{definition}
  A \textbf{Lie superalgebra} is a $\mathbb{Z}_2$-graded vector space
  $L = L_0 \oplus L_1$ along with a bracket $[\bullet, \bullet]$,
  which satisfies the following axioms: if $a,b,c \in L$ are
  homogeneous elements of degree $|a|, |b|$ and $|c|$, then
  \begin{enumerate}
    \item $[a,b] \in L_{|a|+|b|}$;
    \item $[a,b] = (-1)^{|a||b|}[b,a]$;
    \item $(-1)^{|a||c|}[a,[b,c]] + (-1)^{|c\|b|}[c,[a,b]]
          + (-1)^{|b||a|}[b,[c,a]] = 0$.
  \end{enumerate}
  A $\Z$-graded Lie algebra is defined in the same way, except that the 
  vector space has a $\Z$-grading.
\end{definition}

The $S_n$-module $\Lie(n)$ associated to the operad $\Lie$ is a submodule 
of the free Lie algebra with generators $\{x_1,\dots,x_n\}$;
similarly, the $S_n$-module
\begin{equation} \label{eq:lie2}
  \Lambda\Lie(n)=\sgn_n\otimes\Lie(n)[1-n]
\end{equation}
associated to the operad $\Lambda\Lie$ (suspension of the operad
$\Lie$) is a submodule of the free algebra with a shifted Lie
bracket. Note that this is $\Z$-graded, but we can consider the
underlying $\Z_2$-grading. This turns out to be a submodule of the
free Lie superalgebra with generators $\{y_1,\dots,y_n\}$ of
degree~$1$.

Let $A = A_0 \sqcup A_1$ (disjoint union) be a $\mathbb{Z}_2$-graded set,
and $A^*$, the free monoid generated by $A$. Denote by $|a|$ the degree 
of $a \in A^*$. Then $\mathbb{C} \langle A^* \rangle$, the $\mathbb{C}$  
vector space generated by $A^*$ with the obvious multiplication, is 
called the free nonassociative algebra over $A$. Define $[a,b] = 
ab-(-1)^{|a||b|}ba$. Let $I$ be the ideal in $\mathbb{C} \langle A^* 
\rangle$ generated by the set 
\begin{equation*}
  \{ ab+(-1)^{|a||b|}ba , (-1)^{|a||c|}[a,[b,c]] + 
  (-1)^{|c||b|}[c,[a,b]] + (-1)^{|b||a|}[b,[c,a]] \mid
  a,b,c \in A^*\} \, .
\end{equation*}
Then $L_A = \mathbb{C} \langle A^*\rangle / I$ with the binary operation 
$[\bullet,\bullet]$ is a Lie superalgebra called the free Lie superalgebra 
with generators $A$.

From \eqref{eq:lie1} and \eqref{eq:lie2}, it is clear that
\begin{equation}
  W_{l,g} \cong \Lambda\Lie(T_{l,g}) .
\end{equation}
In other words, $W_{l,g}$ is spanned by free Lie superalgebra words in
generators
\begin{equation*}
  \{[a_1,a_2],[a_3,a_4],\ldots,[a_{2l-1},a_{2l}],a_{2l+1},
  \ldots,a_{2g+1}\}
\end{equation*} 
where $a_i$ has degree $1$, and in which each letter $a_i$ occurs
exactly once. This vector space is isomorphic to the vector space
spanned by free Lie superalgebra words in generators
\begin{equation*}
	\{b_1,\ldots,b_l,a_{2l+1},\ldots,a_{2g+1} \}
\end{equation*}
where again each generator occurs once, but now $b_i$ has degree $0$
whereas $a_j$ has degree $1$.

Let $A$ be a $\mathbb{Z}_2$-graded ordered alphabet and $A^*$ the free 
monoid generated by $A$ ordered lexicographically. A word $w$ is a 
\textbf{Lyndon} word if it is lexicographically smaller than all its 
cyclic rearrangements. In other words for any non-trivial factorization 
$w = uv$, we have $w < v$. 

To a Lyndon word over $A$ one can uniquely associate an element of the
free Lie superalgebra generated by $A$. This association is called the  
standard bracketing of a Lyndon word and is defined inductively on the 
length of the word. We denote the bracket of a Lyndon word $w$ by $B(w)$. 

Suppose $w = uv$ where $v$ is the lexicographically smallest proper right 
factor of $w$. Then $u$ and $v$ are both Lyndon words and $B(w) = [B(u),
B(v)]$. Let $\mathfrak{L}(A)$ be the set of all Lyndon words on the alphabet 
$A$. The alphabet $A$ can have elements in different degrees and we define 
the degree of $w \in A^*$ by 
\begin{equation*}
  |w| = \sum_{i=1}^k |a_i| , \quad \text{if $w = a_1\cdots a_k$.}
\end{equation*}

The set
\begin{equation*}
  \{ B(w) \mid w\in \mathfrak{L}(A) \} \cup \{ [B(w),B(w)] \mid
  \text{$w \in \mathfrak{L}(A)$, where $|w|=1$} \}
\end{equation*} 
forms a basis of the free Lie superalgebra with generators $A$ called
the Lyndon basis; see for example Shtern \cite{Sh} (also Reutenauer 
\cite[Sections 4.1 and 5.1]{Reu}).

Clearly, the $V_{l,g}$ are in one-to-one correspondence with the
$S_l\times S_{2g-2l+1}$ invariants of $W_{l,g}$, which acts by 
permuting the letters $\{b_1,\ldots,b_l\}$ and
$\{a_{2l+1},\ldots,a_{2g+1}\}$ separately. This proves the first part
of the following lemma.

\begin{lemma}
  Let $\Lie_{(i,j)}[a,b]$ denote the vector space of Lie superalgebra
  words in the letters $\{a,b\}$, where $a$ has degree $1$ and $b$ has
  degree $0$, homogeneous of degree $i$ in $a$ and $j$ in $b$. Then
  $V_{l,g}\cong\Lie_{(2g-2l+1,l)}[a,b]$. Furthermore, $\dim V_{g,g} = 1$.
\end{lemma}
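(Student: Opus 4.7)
The first assertion $V_{l,g}\cong\Lie_{(2g-2l+1,l)}[a,b]$ is already essentially established in the paragraph immediately preceding the lemma: the $\Aut(T_{l,g})$-action on $W_{l,g}\cong\Lambda\Lie(T_{l,g})$ reduces to an action of $S_l\times S_{2g-2l+1}$ permuting the even letters $\{b_1,\dots,b_l\}$ and the odd letters $\{a_{2l+1},\dots,a_{2g+1}\}$ separately, and taking invariants in this multilinear Lie superalgebra component amounts to collapsing each group of letters to a single symbol, yielding Lie superalgebra words in one even $b$ and one odd $a$ of the prescribed bidegree. So the real work is in the second assertion, $\dim V_{g,g}=1$.

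Specializing to $l=g$ reduces the claim to $\dim\Lie_{(1,g)}[a,b]=1$, i.e., that there is essentially one Lie superalgebra expression using a single odd $a$ and exactly $g$ even copies of $b$. My plan is to compute this dimension via the Lyndon basis recalled just above the lemma, taking the alphabet $\{a,b\}$ with order $b<a$.

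The argument then runs in two short steps. First, classify the Lyndon words of bidegree $(1,g)$: since a Lyndon word is strictly smaller than its cyclic rotations, it must begin with $b$, and hence have the form $b^{i}ab^{g-i}$ for some $1\le i\le g$. For $i<g$ the rotation beginning just after the $a$ equals $b^{g-i}\cdot b^{i}a=b^{g}a$; comparing with $b^{i}ab^{g-i}$ at position $i$ shows that $b^{g}a$ is strictly smaller (because $b<a$), so $b^{i}ab^{g-i}$ is not Lyndon. On the other hand $w_{0}=b^{g}a$ is Lyndon, since each of its proper rotations $b^{j}ab^{g-j}$ with $j<g$ is strictly larger by the same comparison. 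Second, verify that no ``doubled'' basis element $[B(w),B(w)]$ contributes: such an element has bidegree $(2i,2j)$ when $w$ has bidegree $(i,j)$, and $(1,g)$ is never of that form. Hence the Lyndon basis of $\Lie_{(1,g)}[a,b]$ consists of the single element $B(w_{0})=[b,[b,\dots,[b,a]\dots]]=(\ad b)^{g}(a)$, proving $\dim V_{g,g}=1$.

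The argument is largely combinatorial; the subtlety I expect to have to check carefully is the standard bracketing rule for $b^{g}a$ (at each step the lexicographically smallest proper right factor is the full tail, so one peels off $b$'s from the left and indeed recovers the right-nested bracket) and the non-vanishing of $(\ad b)^{g}(a)$. The latter is transparent because $b$ is even, so no super-signs are introduced in iterating $\ad b$, and the element reduces to a standard iterated bracket in the ordinary free Lie algebra on two generators, where it is well known to be nonzero.
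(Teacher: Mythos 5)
Your proof is correct and follows essentially the same route as the paper: both reduce the second claim to counting Lyndon words of multidegree $(1,g)$ in $\{a,b\}$ and invoke the Lyndon basis of the free Lie superalgebra (the first claim being, as you note, already contained in the paragraph preceding the lemma). The only, immaterial, difference is your choice of order $b<a$, which yields the single Lyndon word $b^g a$ and the right-nested bracket $(\ad b)^g(a)$, where the paper uses $a<b$ and obtains $ab^g$ with the left-nested bracket $B(ab^g)=[\dots[a,b],\dots,b]$.
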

\begin{proof} 
  Let us define the order $a<b$ for the generators $\{a,b\}$. Then
  the only Lyndon word with one instance of $a$ and $g$ instances of
  $b$ is $ab^g$; this shows that
  $V_{g,g}$ has dimension $1$, with basis $B(ab^g)=[\dots[a,b],\dots,b]$.
\end{proof}

The following lemma is the main ingredient in the proof of
Lemma~\ref{lem:coh}.
\begin{lemma} 
  The differential $d_1: V_{g,g} \to V_{g-1,g}$ is non-zero.
\end{lemma}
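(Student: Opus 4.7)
The plan is to identify the differential $d_1$ on $V_{g,g}$ with a concrete derivation of the free Lie superalgebra, and then detect non-vanishing by computing a single monomial coefficient in the free associative superalgebra. By the edge-contraction description of $d_1$ (dual to the cobar differential of $\Grav$), contracting the edge from $v_0$ to a small vertex $v_i$ in $T_{g,g}$ inserts the bracket $[a_{2i-1},a_{2i}]$ of the two leaves of $v_i$ at the flag $f_i$ of $v_0$. Under the identification that encodes this pair as the single even letter $b_i$, the $i$-th contraction is exactly the substitution $b_i\mapsto[a_{2i-1},a_{2i}]$; summing over the $g$ edges and passing to the $S_g$-symmetrized picture $\Lie_{(1,g)}[a,b]$, the operator $d_1$ becomes (up to a non-zero scalar) the even super-derivation $\delta$ on $\Lie[a,b]$ defined by $\delta(a)=0$ and $\delta(b)=[a,a]$. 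So it suffices to check that $\delta(B(ab^g))\neq 0$.

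Writing $r_b(x):=[x,b]$ so that $B(ab^g)=r_b^g(a)$, the Leibniz rule for $\delta$ gives the operator identity $\delta\circ r_b=r_b\circ\delta+r_{[a,a]}$; iterating and using $\delta(a)=0$ yields
\[
\delta(B(ab^g)) \;=\; \sum_{k=0}^{g-1} r_b^{\,g-1-k}\bigl([B(ab^k),\,[a,a]]\bigr).
\]
For $k\ge 1$, applying super-Jacobi to $[P,[a,a]]$ with $P=B(ab^k)$ of odd degree and using $[P,a]=[a,P]$ collapses the bracket to $-2[a,[a,B(ab^k)]]$; since the Lyndon bracketing recipe gives $B(aab^k)=[a,B(ab^k)]$ and $B(aaab^k)=[a,B(aab^k)]$, one obtains $[B(ab^k),[a,a]]=-2\,B(aaab^k)$. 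The $k=0$ term $[a,[a,a]]$ vanishes by super-Jacobi applied to three odd copies of $a$. Therefore
\[
\delta(B(ab^g)) \;=\; -2\sum_{k=1}^{g-1} r_b^{\,g-1-k}\bigl(B(aaab^k)\bigr).
\]

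The main obstacle is the non-vanishing of this sum, since $V_{g-1,g}\cong \Lie_{(3,g-1)}[a,b]$ need not be one-dimensional. I would work inside the free associative superalgebra $T[a,b]$ and compute the coefficient of the single monomial $aaab^{g-1}$. The iterated bracket expands as $r_b^m(X)=\sum_{j=0}^m(-1)^j\binom{m}{j}\,b^j X b^{m-j}$, and any term with $j\ge 1$ starts with the letter $b$, so cannot contribute to a monomial starting with $a$. In the $j=0$ term we meet $B(aaab^k)\cdot b^{m}$, and since $B(aaab^k)$ expands in $T[a,b]$ as the Lyndon word $aaab^k$ plus lexicographically larger monomials in the same multidegree $(3,k)$, the only contribution to $aaab^{k+m}=aaab^{g-1}$ comes from this leading term, with coefficient exactly $1$. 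Summing over $k=1,\dots,g-1$, the monomial $aaab^{g-1}$ appears in $\delta(B(ab^g))$ with total coefficient $-2(g-1)$, which is non-zero for $g\ge 2$. Hence $\delta(B(ab^g))\neq 0$, proving that $d_1\colon V_{g,g}\to V_{g-1,g}$ is non-zero.
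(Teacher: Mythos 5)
Your strategy is sound in outline and most of the individual steps are correct (the Leibniz iteration, the identity $[B(ab^k),[a,a]]=-2\,B(a^3b^k)$, and the triangularity argument in the free associative superalgebra), but there is a genuine gap at the very first step: the differential $d_1$ is \emph{not} the unsigned derivation $\delta$ with $\delta(b)=[a,a]$. It is an \emph{alternating} sum of edge contractions: the term that substitutes $[a,a]$ into the $i$-th occurrence of $b$ carries a Koszul sign $(-1)^{i-1}$ coming from the suspensions in the cobar construction for $\Grav$ (equivalently, from the orientation of the edge set of $T_{g,g}$). In your notation $r_b(x)=[x,b]$, the correct formula is
\begin{equation*}
  d_1(\omega_g)\;=\;\sum_{i=1}^{g}(-1)^{i-1}\,r_b^{\,g-i}\bigl([\omega_{i-1},[a,a]]\bigr),
\end{equation*}
where $\omega_k=B(ab^k)$. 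These signs cannot be absorbed into a rescaling of bases, because all $g$ terms land in the same space $V_{g-1,g}$ and get added there.

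This is not cosmetic: it kills your witness. Repeating your monomial count with the signs in place, the coefficient of $a^3b^{g-1}$ in $d_1(\omega_g)$ becomes $-2\sum_{k=1}^{g-1}(-1)^k$, which equals $2$ for $g$ even but $0$ for every odd $g$. Concretely, $d_1(\omega_3)=2\,B(a^2bab)$, in whose associative expansion the monomial $a^3b^2$ does not occur, whereas your computation predicts coefficient $-4$. So for odd $g$ the single monomial $a^3b^{g-1}$ detects nothing and the argument collapses. The repair (and this is what the paper's proof does) is to track the next Lyndon coefficient as well: an induction shows that the coefficient of $B(a^2bab^{g-2})$ in $d_1(\omega_g)$ is $g-1$ for $g$ odd and $g-2$ for $g$ even, which, combined with the coefficient $2$ of $B(a^3b^{g-1})$ in the even case, yields non-vanishing for all $g\ge 2$.
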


\begin{proof}
  The differential $d_1$ in the complex \eqref{eq:res} is the 
  adjoint of the differential of the cobar construction for the 
  gravity operad. Hence the differential corresponds to an 
  alternating sum of operadic compositions in the gravity operad.  

  Since the vector space $V_{g,g}$ is one-dimensional with basis
  vector $\omega_g = B(ab^g)$, the differential $d_1: V_{g,g} \to
  V_{g-1,g}$ is determined by its action on $\omega_g$, which is an
  alternating sum of the terms obtained by replacing each instance of
  $b$ by $[a,a]$. In other words,
  \begin{equation} 
    \label{diff}
    d_1(\omega_g) = \sum_{i=1}^{g} (-1)^{i-1}
    [\dots[[\omega_{i-1},[a,a]],b],\dots,b] .
  \end{equation} 

  Let $\mathfrak{L}(3,g-1)$ be the set of Lyndon words in $\{a,b\}$
  with $3$ instances of $a$ and $(g-1)$ of $b$. We claim that for
  $g\geq 2$,
  \begin{equation*}
    d_1(\omega_g) = 
    \begin{cases}
      \displaystyle 2\,B(a^3b^{g-1}) + (g-2)B(a^2bab^{g-2}) +
      \sum_{\substack{w \in \mathfrak{L}(3,g-1) \\ w > a^2bab^{g-2}}}
      n_wB(w) , & \text{$g$ even,} \\ \\
      \displaystyle (g-1)B(a^2bab^{g-2}) + \sum_{\substack{w \in
          \mathfrak{L}(3,g-1) \\ w > a^2bab^{g-2}}} n_w B(w) , &
      \text{$g$ odd.}
    \end{cases}
  \end{equation*}
  The cases $g=2,3$ are true: by the super-Jacobi identity,
  $[a,[a,a]]=0$, hence
  \begin{align*}
    d_1(\omega_2) &= d_1[[a,b],b] = [[a,[a,a]],b] - [[a,b],[a,a]] \\
    &= [[a,a],[a,b]] = 2[a,[a,[a,b]]] = 2\,B(aaab) \\
    d_1(\omega_3) &= d_1[[[a,b],b],b] = [[[a,[a,a]],b],b] -
    [[[a,b],[a,a]],b] + [[[a,b],b],[a,a]] \\
    &= 2\,[[a,[a,b]],[a,b]] = 2\,B(aabab) .
  \end{align*}
  The induction now follows on combining the following results:
  \begin{enumerate}
  \item By the standard triangularity property for the Lyndon basis,
    if $m<n$ are Lyndon words, and thus $mn$ is again a Lyndon word,
    then
    \begin{equation*}
      [B(m),B(n)] = B(mn) + \sum_{\substack{ w 
          \text{ Lyndon word} \\ |w| = |m|+|n|,\ w > mn}} n_w B(w) .
    \end{equation*}
  \item Expanding $[B(a^3b^{g-2}),b]$, and applying the
    super-Jacobi identity, we see that if $g>3$, then
    \begin{equation*}
      [B(a^3b^{g-2}),b] = B(a^3b^{g-1}) + B(a^2bab^{g-2}) - B(a^2b^{g-2}ab) .
    \end{equation*}
  \item From \eqref{diff}, it follows that
    \begin{align*}
      d_1(\omega_g) &= [d_1(\omega_{g-1}),b] +
      (-1)^{g-1}[\omega_{g-1},[a,a]] \\
      &= [d_1(\omega_{g-1}),b] + (-1)^g\,2\,B(a^3b^{g-1}) .
    \end{align*}
  \end{enumerate}
  The lemma is proved.
\end{proof}

The image of $d_1: V_{g,g} \to V_{g-1,g}$ lies in the kernel of $d_1:
V_{g-1,g} \to V_{g-2,g}$.  Since $V_{g-1,g} \subset
{}_gF_1^{-g+1,2g-1}$, this implies that $H^g_c(\mog{0}) \cong
{}_gF_2^{-g+1,2g-1}$ is non-trivial (see \eqref{comm-diag}),
completing the proof of Lemma~\ref{lem:coh}.

\begin{bibdiv}
\begin{biblist}

\bib{D}{article}{
   author={Deligne, Pierre},
   title={Th\'eorie de Hodge. II},
   language={French},
   journal={Inst. Hautes \'Etudes Sci. Publ. Math.},
   number={40},
   date={1971},
   pages={5--57},
%   issn={0073-8301},
%   review={\MR{0498551 (58 \#16653a)}},
}

\bib{FL}{article}{
   author={Fontanari, Claudio},
   author={Looijenga, Eduard},
   title={A perfect stratification of $\scr M\sb g$ for $g\le5$},
   journal={Geom. Dedicata},
   volume={136},
   date={2008},
   pages={133--143},
%   issn={0046-5755},
%   review={\MR{2443348 (2009f:14050)}},
%   doi={10.1007/s10711-008-9280-y},
}

\bib{FP}{article}{
   author={Fontanari, Claudio},
   author={Pascolutti, Stefano},
   title={An affine open covering of $\scr{M}_g$ for $g\leq 5$},
   journal={Geom. Dedicata},
   volume={158},
   date={2012},
   pages={61--68},
   %issn={0046-5755},
   %review={\MR{2922703}},
   %doi={10.1007/s10711-011-9620-1},
}

\bib{Get1}{article}{
   author={Getzler, Ezra},
   title={Two-dimensional topological gravity and equivariant cohomology},
   journal={Comm. Math. Phys.},
   volume={163},
   date={1994},
   number={3},
   pages={473--489},
%   issn={0010-3616},
%   review={\MR{1284793 (95h:81100)}},
}

\bib{Get2}{article}{
   author={Getzler, Ezra},
   title={Operads and moduli spaces of genus $0$ Riemann surfaces},
   conference={
      title={The moduli space of curves},
      address={Texel Island},
      date={1994},
   },
   book={
      series={Progr. Math.},
      volume={129},
      publisher={Birkh\"auser Boston},
      place={Boston, MA},
   },
   date={1995},
   pages={199--230},
%   review={\MR{1363058 (96k:18008)}},
}

\bib{GK1}{article}{
   author={Getzler, Ezra},
   author={Kapranov, M. M.},
   title={Modular operads},
   journal={Compositio Math.},
   volume={110},
   date={1998},
   number={1},
   pages={65--126},
%   issn={0010-437X},
%   review={\MR{1601666 (99f:18009)}},
%   doi={10.1023/A:1000245600345},
}

\bib{GV}{article}{
   author={Graber, Tom},
   author={Vakil, Ravi},
   title={Relative virtual localization and vanishing of tautological
   classes on moduli spaces of curves},
   journal={Duke Math. J.},
   volume={130},
   date={2005},
   number={1},
   pages={1--37},
%   issn={0012-7094},
%   review={\MR{2176546 (2006j:14035)}},
%   doi={10.1215/S0012-7094-05-13011-3},
}

\bib{HM1}{book}{
   author={Harris, Joe},
   author={Morrison, Ian},
   title={Moduli of curves},
   series={Graduate Texts in Mathematics},
   volume={187},
   publisher={Springer-Verlag},
   place={New York},
   date={1998},
   pages={xiv+366},
%   isbn={0-387-98438-0},
%   isbn={0-387-98429-1},
%   review={\MR{1631825 (99g:14031)}},
}

\bib{HM2}{article}{
   author={Harris, Joe},
   author={Mumford, David},
   title={On the Kodaira dimension of the moduli space of curves},
   note={With an appendix by William Fulton},
   journal={Invent. Math.},
   volume={67},
   date={1982},
   number={1},
   pages={23--88},
%   issn={0020-9910},
%   review={\MR{664324 (83i:14018)}},
%   doi={10.1007/BF01393371},
}

\bib{Loo}{article}{
   author={Looijenga, Eduard},
   title={Cohomological amplitude for constructible sheaves on moduli
     spaces of curves},
   eprint={http://arxiv.org/abs/1203.4548v3},
   date={2012},
   note={Paper withdrawn due to incomplete proof of main result. This is 
     a previous preprint arxiv:1203.4548v3}
}

\bib{Reu}{book}{
   author={Reutenauer, Christophe},
   title={Free Lie algebras},
   series={London Mathematical Society Monographs. New Series},
   volume={7},
%  note={Oxford Science Publications},
   publisher={Oxford University Press},
   place={New York},
   date={1993},
   pages={xviii+269},
%  isbn={0-19-853679-8},
%  review={\MR{1231799 (94j:17002)}},
}

\bib{RV}{article}{
   author={Roth, Mike},
   author={Vakil, Ravi},
   title={The affine stratification number and the moduli space of curves},
   conference={
      title={Algebraic structures and moduli spaces},
   },
   book={
      series={CRM Proc. Lecture Notes},
      volume={38},
      publisher={Amer. Math. Soc.},
      place={Providence, RI},
   },
   date={2004},
   pages={213--227},
%  review={\MR{2096147 (2005m:14004)}},
}

\bib{Sh}{article}{
   author={Shtern, A. S.},
   title={Free Lie superalgebras},
   language={Russian},
   journal={Sibirsk. Mat. Zh.},
   volume={27},
   date={1986},
   number={1},
   pages={170--174, 200},
%  issn={0037-4474},
%  review={\MR{847425 (87i:17003)}},
}

\bib{V}{article}{
   author={Vakil, Ravi},
   title={The moduli space of curves and its tautological ring},
   journal={Notices Amer. Math. Soc.},
   volume={50},
   date={2003},
   number={6},
   pages={647--658},
%  issn={0002-9920},
%  review={\MR{1988577 (2004e:14045)}},
}

\end{biblist}
\end{bibdiv}

\end{document}